\numberwithin{equation}{section}
\newtheorem{theorem}{Theorem}
\newtheorem{corollary}[theorem]{Corollary}
\newtheorem{proposition}[theorem]{Proposition}
\theoremstyle{definition}
\newtheorem{definition}[theorem]{Definition}
\theoremstyle{remark}
\newtheorem{remark}[theorem]{Remark}
\newcommand{\R}{\mathbb{R}}
\newcommand{\Ibb}{\mathbb{I}}
\newcommand{\Fbb}{\mathbb{F}}
\newcommand{\Zbb}{\mathbb{Z}}
\renewcommand{\P}{\mathsf{P}}
\renewcommand{\Im}{\mathrm{Im} \,}
\renewcommand{\H}{\mathrm{H}}
\renewcommand{\rank}{\mathrm{rank} \,}
\newcommand{\Dgm}{\mathrm{Dgm} \,}
\newcommand{\Barc}{\mathrm{Bar} \,}
\newcommand{\Vect}{\mathsf{Vec}}
\def\ttcond{%
\ttfamily\fontseries{mc}\fontshape{n}\selectfont%
}
\def\url@dfstyle{%
\def\UrlFont{\ttcond}%
\@ifundefined{Url@ttdo}{}{\Url@ttdo}%
}
\DeclareUrlCommand\webLink{\urlstyle{df}}
\let\webLinkFont=\ttcond
\def\httpsAddr#1{\href{https:#1}{\webLinkFont https:}\webLink{#1}}
\title{\Large\bf{Stability for Inference with Persistent Homology Rank Functions}}
\author{Qiquan Wang\thanks{Department of Mathematics, Imperial College London, UK} \and In\'{e}s Garc\'{i}a-Redondo\thanks{London School of Geometry and Number Theory, University College London, UK}\textsuperscript{\, ,}\footnotemark[1] \and Pierre Faug\`{e}re\thanks{Department of Mathematics, ENS Lyon, France} \and Gregory Henselman-Petrusek\thanks{Pacific Northwest National Laboratory, Richland, Washington, USA} \and Anthea Monod\footnotemark[1]}
\date{}
\begin{document}
\maketitle

\begin{abstract}
 Persistent homology barcodes and diagrams are a cornerstone of topological data analysis that capture the ``shape'' of a wide range of complex data structures, such as point clouds, networks, and functions.  However, their use in statistical settings is challenging due to their complex geometric structure. In this paper, we revisit the persistent homology rank function, which is mathematically equivalent to a barcode and persistence diagram, as a tool for statistics and machine learning. Rank functions, being functions, enable the direct application of the statistical theory of functional data analysis (FDA)---a domain of statistics adapted for data in the form of functions. A key challenge they present over barcodes in practice, however, is their lack of stability---a property that is crucial to validate their use as a faithful representation of the data and therefore a viable summary statistic. In this paper, we fill this gap by deriving two stability results for persistent homology rank functions under a suitable metric for FDA integration. We then study the performance of rank functions in functional inferential statistics and machine learning on real data applications, in both single and multiparameter persistent homology. We find that the use of persistent homology captured by rank functions offers a clear improvement over existing non-persistence-based approaches.
\end{abstract}

% keywords can be removed
%\keywords{First keyword \and Second keyword \and More}

\section{Introduction}
\label{sec:introduction}
Topological data analysis (TDA) leverages theory from algebraic topology to computational and data analytic settings, and has enjoyed great success in applications in many fields, including biology \citep{emmett_multiscale_2016, cang_analysis_2017, cang_representability_2018}, medicine \citep{crawford2016functional, biwer_windowed_2017}, physics \citep{de_silva_coverage_2007}, economics \citep{gidea_topological_2017}, and motion planning \citep{bhattacharya_persistent_2015, vasudevan_persistent_2013}, to name a few.  A cornerstone methodology of TDA is \emph{persistent homology}, which produces a summary statistic of data.   Its widespread applicability stems from its flexibility to adapt to a variety of complex data structures; its interpretability within the scientific domains where data arise; and its stability, which is the focus of this work.  Stability provides a notion of faithfulness of the topological representation of the input data, guaranteeing that a bounded perturbation of the input data results in a bounded perturbation of their topological representation captured by persistent homology.  Stability is a crucial property that validates the use of persistent homology in real data applications.

Persistent homology has its roots in various constructions and thus has various representations; the most well-known is the \emph{persistence diagram} or equivalently, the \emph{barcode}.  A less-used representation is the \emph{rank function}, initially proposed in the early 1990s \citep{frosini1992measuring} as the \emph{size function}.  Size functions were used as a mathematical tool for shape and image analysis in computer vision and pattern recognition \citep{Verri1993, frosini1999size, landi2002size, Biasotti2008, 10.1007/978-3-642-04146-4_69}, and were reinterpreted in algebraic terms using a correspondence between size functions and formal series \citep{frosini_size_2001} (see \cite{Biasotti:2008:DSG:1391729.1391731} for a thorough survey on the theory of size functions). Such algebraic topological formulations of rank functions directly coincide with parallel concepts from persistent homology.

Although rank functions were proposed before persistence diagrams and barcodes, and they are in fact equivalent to them, their use has been comparatively restricted due to a the difficulty in establishing stability results and a less comprehensive understanding of their effectiveness in practical data scenarios.  
Nevertheless, with the current active research interest in \emph{multiparameter persistent homology}, rank functions are again becoming increasingly relevant, as they inherently and directly adapt to this higher dimensional framework where persistence diagrams and barcodes do not \citep{carlsson_zomorodian_multiparameter2009}.  Additionally, unlike persistence diagrams and barcodes, rank functions, due to their structural form as functions, are naturally amenable to \emph{functional data analysis} (FDA), a robust statistical field focused on analyzing data taking the form of functions, curves, and surfaces. This adaptability to FDA (see \cite{ramsay2, ramsayapplied2} for a thorough introduction to the field and its applications) offers a rich statistical toolkit not directly accessible with persistence diagrams and barcodes.
FDA methods have been previously used in TDA by \cite{crawford2016functional}, who perform Gaussian process regression using a summary statistic constructed from a dynamic version of the Euler characteristic, which is an alternative topological invariant and distinct from persistence barcodes and diagrams. Principal component analysis (PCA) for rank functions---an important dimension reduction technique in descriptive, rather than inferential, statistics---has also been studied in \cite{robins_principal_2016}. 

In our work, we aim to study the performance of rank functions in inference tasks, which move beyond the descriptive analysis by \cite{robins_principal_2016} and, within the field of statistics, are arguably significantly more challenging: where descriptive statistics studies properties observed in a single sample of data, inferential statistics aims to impute information and provide guarantees on the possibly infinite, unobserved population from observed data.  To validate our findings, however, we need to first establish suitable stability properties of rank functions.  Here, ``suitable'' means that we will need to establish stability under a metric conducive to the application of FDA methods.  Once this is achieved, we then study the performance of rank functions in both single- and multiparameter persistent homology in inferential machine learning tasks on real data applications.  We find a clear improvement in performance using rank functions compared to existing methods that do not incorporate persistent homology, and other persistence-based methods.  

The remainder of this paper is organized as follows. In Section \ref{sec:setting}, we provide a background and literature review on rank functions and discuss their relation to barcodes and persistence diagrams, as well as various metrics associated to these representations and their implications on stability.  In Section \ref{sec:stability_results}, we present two stability results for rank functions with respect to a suitable metric for FDA implementation.  These stability guarantees motivate the applications of rank functions in inferential tasks using FDA to real data presented in Sections \ref{sec:application} and \ref{sec:mph_applications}. We close with a discussion in Section \ref{sec:conclusion} on our findings and propose directions for future research.  Proofs and further theoretical details are given in Appendix \ref{app:proofs}.

%-------------------------------------------------------------------------
\section{Preliminaries: Persistent Homology}
\label{sec:setting}

In this section, we review the essential background and existing literature to the theory of persistent homology and its metrics, foundational to our work.

\subsection{Persistence Modules and Rank Functions}
\label{sec:persistence}

The algebraic object that is the central focus of persistent homology theory is the \emph{persistence module}, a functor mapping from a poset category to the category of vector spaces, \(M:(\P, \leq) \to \Vect\), also written as \(M \in \Vect^{(\P,\leq)}\) \citep{bubenik_categorification_2014, bubenik_metrics_2015, kim_generalized_2021}. 
Unless otherwise specified, we assume that \(\Vect\) is the category of finite dimensional vector spaces and work with \emph{pointwise finite dimensional} (p.f.d.) persistence modules. 

Arguably, the most relevant example is the module of persistent homology for a finite simplicial complex, first introduced by \cite{edelsbrunner_topological_2002} and obtained as follows. Consider a \emph{filtration}, i.e., a diagram \(F \in \mathsf{Simp}^{(\R,\leq)}\) such that \(F_x := F(x)\) is a finite simplicial complex for each \(x\in \R\), and such that for any other \(y\in \R\) with \(x\leq y\), \(F(x\leq y)\) is an inclusion \(F_x \subset F_y\). A common example is the Vietoris--Rips filtration \citep{vietoris_uber_1927}, which for a metric space $(X,d)$ and a finite subset \(S \subset X\) is denoted as $\mathrm{VR}(S) = \{\mathrm{VR}_t(S):\, t \in [0, +\infty)]\}$. The simplicial complex at filtration value \(t\in \R\) is defined as the family of all simplices of diameter less or equal than \(t\) that can be formed with the finite set \(S\) as set of vertices. An example of a Vietoris--Rips filtration is given in Figure \ref{fig:vietoris-rips}.

Given that we are working with finite simplicial complexes, there is a finite discrete set of values over which the filtration changes.
For each \(k\geq 0 \), we obtain another diagram \(\H_k(F) \in \mathsf{Vec}^{(\R,\leq)}\) by setting
\[\H_k(F)\, (x) := \H_k(F_x, \Fbb),\]
where \(\H_k: \mathsf{Simp} \to \mathsf{Vec}\) is the homology functor of order \(k\geq 0 \) with coefficients over a field \(\Fbb\).
We shall also refer to the diagram given by
\[\H(F)\, (x) := \H(F_x, \Fbb) = \bigoplus_{k\geq 0} \H_k(F_x,\Fbb)\]
using the notation \(\H(F)\in \mathsf{Vec}^{(\R,\leq)}\).

\begin{figure}[htp]
    \centering
    \includegraphics[width= .9\columnwidth]{figures/filtration_circle_N_30.png}
    \caption{Vietoris--Rips filtration of a point cloud of 30 points over a circle with Gaussian noise of scale \(0.1\) added, at 4 different filtration values (i.e.,~radius of the balls centered on the points). The simplicial complexes are built by adding an edge between any two points with overlapping circles (at a distance less or equal than twice the filtration value); and higher \(k\)-dimensional simplices for each \(k+1\) subset of connected points. From the third image to the fourth, a 1-cycle (loop) appears in the filtration, encircling the hole in the shape. This is the type of feature that PH aims to capture.}
    \label{fig:vietoris-rips}
\end{figure}

The p.f.d.~persistence module \(\H(F)\in \Vect^{(\R,\leq)}\) is called the \emph{persistent homology} module of the filtration. The previous construction is dimensionality-independent, allowing its application to both 2D and 3D data: the Vietoris--Rips filtration relies solely on pairwise distances, making it adaptable to any dimension. We demonstrate its use with 2D data in Section \ref{sec:application} and with 3D data in Section \ref{sec:mph_applications}.

Observe that we obtain a spectrum of linear maps connecting the vector spaces, and thus, a natural way to study its structure is to consider the \emph{ranks} of these maps. Let \(\R^{2+}:= \left\{(x,y) \in \left(\{-\infty\} \cup \R\right) \times \left(\R \cup \{\infty\}\right): x\leq y\right\}.\)

\begin{definition}[Rank Function]
\label{def:rank_function}
Given a p.f.d.~persistence module \(M\in \Vect^{(\R,\leq)}\), its \emph{rank function} is defined as 
\[\begin{array}{crcl}
    \beta^{M} : &  \R^{2+} & \to & \Zbb\\
     &  (x,y) & \mapsto & \rank M(x\leq y) = \dim \Im (M(x\leq y)).
\end{array}\]
The space of rank functions will be denoted by \(\mathcal{I}_1\).
\end{definition}

Deterministic and probabilistic properties of the rank functions have been also studied under the name of \emph{persistent Betti numbers}, first introduced by \cite{Edelsbrunner03}.  
For instance, \cite{duy2016limit, krebs2023asymptotic} investigated the asymptotic normality and stabilizing properties of central limit theorems of persistent Betti numbers under the homogeneous Poisson and binomial processes and variants. This allows for implementations of the bootstrap procedure on the persistent Betti numbers \citep{Roycraft_2023}. Further, in \cite{botnan2021consistency} the consistency and asymptotic normality of multiparameter persistent Betti numbers in large domains was determined, which is an important foundation to constructing statistical hypothesis tests. It is worth highlighting that this line of work deals mainly with \emph{probabilistic} properties of rank functions (i.e., persistent Betti numbers), as opposed to their \emph{statistical} performance in real data analysis, which is the focus of this work.

\subsection{Persistence Diagrams and Barcodes}
\label{sec:pd_and_barcodes}

A \emph{complete invariant} is a specific invariant assigned to a persistence module: it has the same value for all isomorphic persistence modules and is different for non-isomorphic ones. In single-parameter persistent homology, rank functions are equivalent to \emph{persistence diagrams} and \emph{barcodes}---two complete, discrete invariants obtained from the following distinct approaches.

Persistence diagrams can be traced back to the study of discontinuities of rank functions \citep{frosini_size_2001}, later reinterpreted to visually capture the persistent homology of a filtered simplicial complex \(H(F) \in \Vect^{(\R,\leq)}\) \citep{edelsbrunner_topological_2002}. We introduce them for general persistence modules \(M \in \Vect^{(\R,\leq)}\). 
Let \(T= \{t_1,\ldots,t_\ell\} \subset \R\) be the discrete set of values over which the module changes,
and consider a sequence \(\{s_0,s_1,\ldots,s_\ell\}\) of real numbers interleaved with the elements of $T$: \(s_{i-1}\leq t_i \leq s_i\). Also, set \(s_{-1} = t_0 = -\infty\) and \(s_{\ell+1} = t_{\ell+1} = +\infty\) and call \(\overline{T} = T \cup \{- \infty, + \infty\} \). Lastly, define
\begin{equation}
    \mu_i^j := \beta^M(s_{i-1},s_j) - \beta^M(s_i,s_j) + \beta^M(s_{i},s_{j-1}) - \beta^M(s_{i-1},s_{j-1}).
    \label{eq:inclusion_exclusion}
\end{equation}

\begin{definition}
The {\em persistence diagram} of $M \in \Vect^{(\R, \leq)}$ is the multiset of points given by
\[\Dgm (M) := \{(t_i, t_j)\in \overline{T} \times \overline{T} : t_i < t_j\},\]
where each point \((t_i, t_j)\) has multiplicity \(\mu_i^j\), union all the points in the diagonal \(\partial = \{(x,y) \in \R^{2+}: x = y\}\) counted with infinite multiplicity. The space of persistence diagrams is denoted by \(\mathcal{D}\).
\label{def:persistence_diagram}
\end{definition}

Figure \ref{fig:3d_persistence_diagrams} shows persistence diagrams for 0- and 1-homology, representing components and loops, respectively, in three 3D point clouds. The sphere exhibits no persistent points far from the diagonal, indicating the absence of non-trivial loops on its surface---every loop can be deformed to a point. In contrast, the torus displays two persistent points representing its horizontal and vertical loops. The Stanford Bunny \citep{turk_zippered_1994} (see \httpsAddr{//faculty.cc.gatech.edu/~turk/bunny/bunny.html}) exhibits a more complex interpretation, featuring one persistent 1-cycle.

The Structure Theorem due to  \cite{zomorodian_computing_2005} (with the version in full generality due to \cite{crawley-boevey_decomposition_2015} and \cite{botnan_decomposition_2020}) asserts that any p.f.d.~persistence module $M$ is isomorphic to an essentially unique (up to reordering) finite direct sum of indecomposable persistence modules
$$
M \simeq M_1 \oplus \cdots \oplus M_m.
$$
For single-parameter persistence, each $M_j$ is an {\em interval persistence module}, i.e., there is a pair of values $b_j<d_j$, where $d_j$ may be infinite, such that $M_j(x)$ is a copy of the field for all values $b_j \leq x< d_j$ and zero elsewhere. We denote these persistence modules by \(\mathbb{I} [b_j,d_j) \). 

\begin{definition}[\hspace{1sp}\citep{zomorodian_computing_2005, carlsson2010zigzag}]
\label{def:barcode}
The \emph{barcode} of $M \in \Vect^{(\R, \leq)}$ is the list of indecomposables given by the Structure Theorem, or equivalently, the collection of intervals that define these indecomposables
\[\Barc (M) := \{[b_j, d_j):\, M_j(x) \mathrm{\ is \ non trivial\ for\ } b_j \leq x < d_j\}.\]
The length of a bar is called its \emph{persistence}.
\end{definition}

A barcode translates to a persistence diagram by plotting the left and right endpoint of each interval persistence module as an ordered pair.  A persistence diagram translates to a barcode by turning each point $(x,y)$ with \(x<y\) in the persistence diagram into an interval persistence module beginning at $x$ and ending at $y$.  In this way, the persistence diagram is equivalent to a barcode, although the two definitions arise from different perspectives. 

\begin{figure}[H]
    \centering
    \begin{subfigure}{0.45\linewidth}
        \centering
        \includegraphics[width=.9\linewidth]{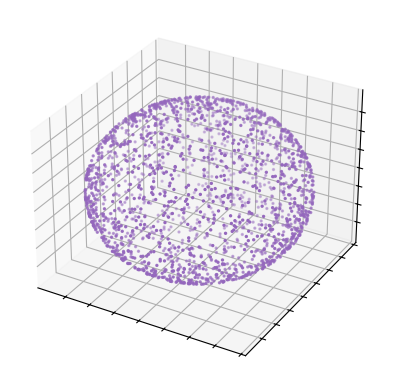}
        \caption{2000 points on a 3D sphere.}
    \end{subfigure}
    \hfill
    \begin{subfigure}{0.45\linewidth}
        \centering
        \includegraphics[width=.9\linewidth]{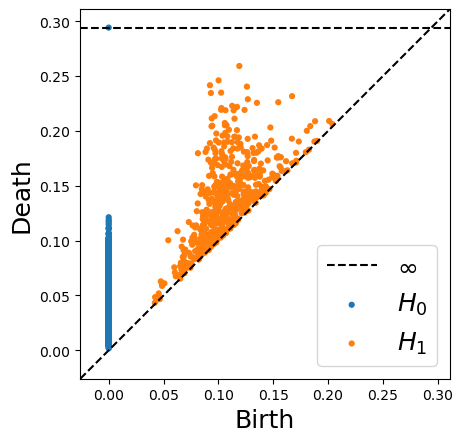}
        \caption{Corresponding diagram.}
    \end{subfigure}
    \newline
    \begin{subfigure}{0.45\linewidth}
        \centering
        \includegraphics[width=.9\linewidth]{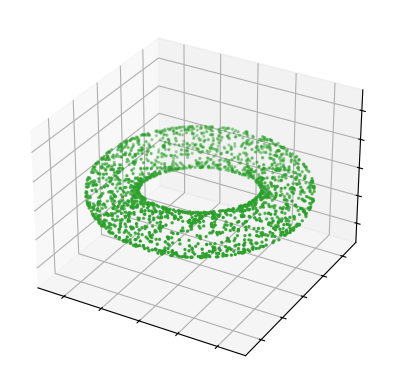}
        \caption{2000 points on a 3D torus.}
    \end{subfigure}
    \hfill
    \begin{subfigure}{0.45\linewidth}
        \centering
        \includegraphics[width=.9\linewidth]{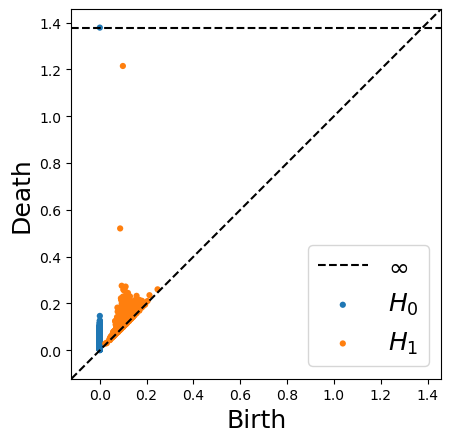}
        \caption{Corresponding diagram.}
        \label{fig:plot4}
    \end{subfigure}
    \newline
    \begin{subfigure}{0.45\linewidth}
        \centering
        \includegraphics[width=.9\linewidth]{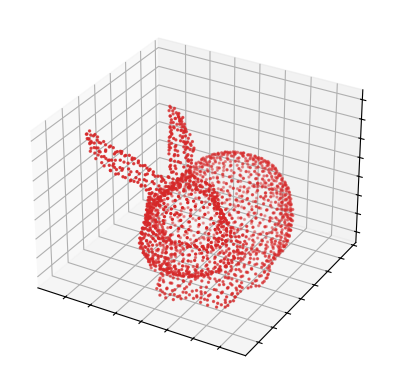}
        \caption{Stanford bunny with 1889 points.}
    \end{subfigure}
    \hfill
    \begin{subfigure}{0.45\linewidth}
        \centering
        \includegraphics[width=.9\linewidth]{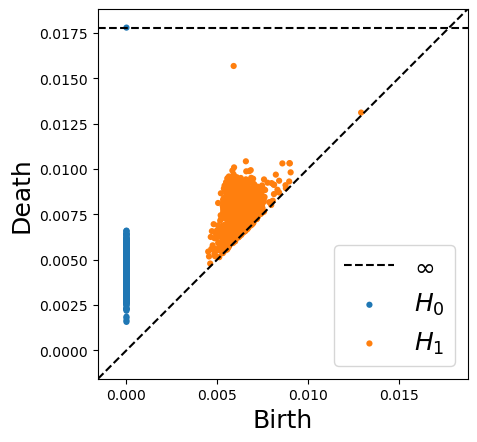}
        \caption{Corresponding diagram.}
    \end{subfigure}
    \caption{Examples of 3D point clouds and their corresponding persistence diagrams.}
    \label{fig:3d_persistence_diagrams}
\end{figure}

Rank functions are also in bijection with persistence diagrams and barcodes. We have seen above how to define persistence diagrams from an inclusion-exclusion formula \eqref{eq:inclusion_exclusion} on rank functions. Moreover, rank functions can be seen as cumulative functions on the persistence diagrams: the value of the rank function \(\beta^{M}(x,y)\) corresponds to the number of points (counted with multiplicities) in the region \((-\infty,x] \times [y, \infty)\) of the persistence diagram, providing the converse direction of the bijection. Figure \ref{fig:pdtorank} illustrates a persistence diagram and its corresponding rank function, where the equivalence between both objects becomes apparent.

\begin{figure}[htb]
\begin{subfigure}{.5\columnwidth}
\centering
\subfloat{\includegraphics[width=.9\columnwidth]{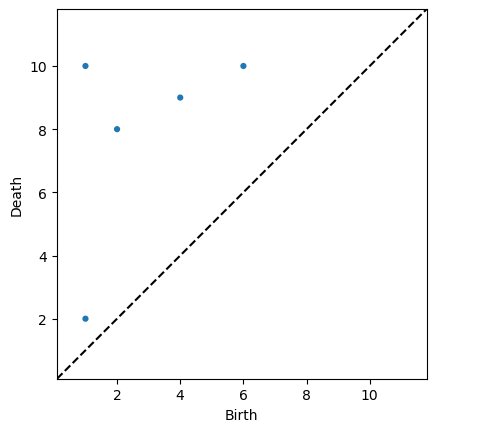}}
\end{subfigure}%
\begin{subfigure}{.5\columnwidth}
\centering
\subfloat{\includegraphics[width=.9\columnwidth]{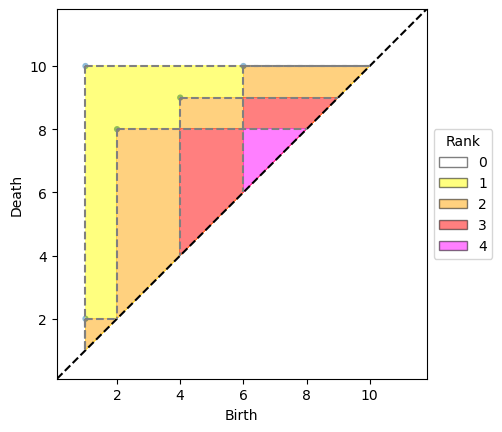}}
\end{subfigure}
\caption{Persistence diagram and its corresponding rank function.}
\label{fig:pdtorank}
\end{figure}

\subsection{Metrics and Stability in Persistent Homology}
\label{sec:stability}

Stability results, which are the main theoretical contribution of this paper, give bounds for metrics defined on invariants of persistence modules.  Given that there exist various metrics in persistent homology, the question of choosing appropriate metrics depends on the eventual goal.

\noindent
\textbf{Metrics on Barcodes and Persistence Diagrams.} We recall the most widely used metrics to compare persistence diagrams in single-parameter persistent homology and their stability properties.

\begin{definition}
The \emph{bottleneck distance} between the persistence diagrams \(D_1\) and \(D_2\) is defined as
\[d_B(D_1, D_2) := \inf_{\phi: D_1 \to D_2}\sup_{x \in D_1}\norm{x- \phi(x)}_\infty,\]
where \(\|\cdot\|_\infty\) is the infinity norm \(\ell^\infty\) on \(\R^2\) and \(\phi\) ranges over all bijections between \(D_1\) and \(D_2\).
\end{definition}

The first stability result in TDA concerns the bottleneck distance \citep{cohen-steiner_stability_2007}, proving that the map sending a tame function \(f:X\to \R\) defining a filtration \(F(x) := f^{-1}(-\infty,x]\) to its persistence diagram \(\Dgm \H(F)\) is \(1\)-Lipschitz with respect to the bottleneck distance for diagrams and the \(L^\infty\) distance for functions. 

The bottleneck distance can be extended by replacing \(\ell^\infty\) with \(\ell^p\) norms, which give a stronger sense of proximity.
\begin{definition}
\label{def:pwass_dist}
For \(1 \leq p < \infty\) and \(1\leq q \leq \infty\) we define the \emph{p,q-Wasserstein distance} between two persistence diagrams \(D_1\) and \(D_2\) as
\[W_{p,q}(D_1, D_2) := \inf_{\phi: D_1 \to D_2} \left[\sum_{x \in D_1}\norm{x - \phi(x)}_q^p\right]^{1/p}\]
where \(\| \cdot \|_q\) is the \(\ell^q\) norm on \(\R^2\) and \(\phi\) ranges over all bijections between \(D_1\) and \(D_2\).\
\end{definition}
In our work, we will assume \(p = q\) and refer to this metric simply as the \(p\)-Wasserstein distance \(W_p\).

Wasserstein metrics are widely used in applications, especially \(p\)-Wasserstein distances with \(p=1,2\), and in some instances have been able to reveal more insight in application settings than the bottleneck distance.  For example, in a protein flexibility analysis study, \cite{bramer_atom-specific_2020} show that \(p,\infty\)-Wasserstein metrics provide more accurate results when comparing two conjugate point clouds obtained using atom-specific persistent homology \citep{cang_analysis_2017, cang_representability_2018}. \cite{gamble_exploring_2010} explore the power of the Wasserstein distance in the context of statistical analysis of landmark-shape data. 
 \cite{gidea_topological_2017} shows that the \(W_{2,\infty}\) distance is able to detect premature evidence for critical transitions is financial data.  \cite{hamilton_persistent_2022} study barcodes endowed with Wasserstein distances for protein folding data and compare them with the Gaussian integral tuned \citep{burley_rcsb_2021} vector representation.

Despite the desirable performance of Wasserstein distances in applications, their stability properties have not been as broadly studied 
until recently, due to the level of additional technicality required. In \cite{skraba_wasserstein_2021}, the following cellular Wasserstein Stability Theorem for \(p\)-Wasserstein metrics was established, validating many of the existing results in applications and justifying their continued use.

\begin{theorem}[Cellular Wasserstein Stability Theorem, \citep{skraba_wasserstein_2021}]
    \label{thm:cellular_wass_stability}
    Let \(f,g: K \to \R\) be monotone functions on a finite CW-complex \(K\). Then
$
        W_p(\Dgm(f) ,\, \Dgm(g)) \leq \| f-g\|_p.
$
\end{theorem}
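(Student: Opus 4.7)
The plan is to reduce the theorem to a continuity-type argument along a one-parameter interpolation between $f$ and $g$. Define $h_t := (1-t)f + tg$ for $t \in [0,1]$, and denote the resulting persistence diagrams by $D_t := \Dgm(h_t)$. Because $K$ has finitely many cells, the filter values $\{h_t(\sigma)\}_{\sigma \in K}$ determine the persistence pairing in a piecewise-constant manner: the filtration order of cells is stable except at a finite set of critical times $0 < t_1 < \cdots < t_m < 1$ where two cells momentarily share the same value under $h_t$. The aim is to control the movement of $D_t$ between critical times and across them, then sum up via the triangle inequality.

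On each open subinterval $(t_k, t_{k+1})$, the combinatorial pairing between birth and death cells is constant, so every off-diagonal point of $D_t$ has the form $(h_t(\sigma_b), h_t(\sigma_d))$ for a fixed pair of cells $\sigma_b, \sigma_d \in K$ depending only on the pairing, not on $t$. Using the canonical bijection that maps corresponding off-diagonal points to each other (and sends any leftover points to the diagonal), one obtains
\[
W_p(D_{t'}, D_{t''})^p \;\leq\; \sum_{\sigma \in K} |h_{t'}(\sigma) - h_{t''}(\sigma)|^p \;=\; |t'' - t'|^p\, \|g - f\|_p^p
\]
for all $t', t'' \in (t_k, t_{k+1})$, which handles the ``smooth'' part of the evolution.

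The delicate part is the behavior at a critical time $t_k$, where adjacent cells $\sigma, \tau$ with $h_{t_k}(\sigma) = h_{t_k}(\tau)$ swap order in the filtration. The key ingredient is a local transposition lemma asserting that such a swap changes only a localized portion of the persistence diagram, and the affected points can be matched across the swap with total $\ell^p$ displacement tending to zero as $t \to t_k^\pm$. Consequently $t \mapsto D_t$ is continuous in $W_p$, and any rewiring of the persistence pairing contributes nothing extra to the total distance.

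Combining the two estimates by the triangle inequality along a partition refining $\{t_1,\ldots,t_m\}$ and passing to a limit of Riemann-type sums yields
\[
W_p(D_0, D_1) \;\leq\; \int_0^1 \|h'_t\|_p \, dt \;=\; \|g - f\|_p.
\]
The main obstacle is the transposition lemma: while the intuition that ``swapping equal-value cells leaves the diagram unchanged at the critical time'' is appealing, the persistence pairing can genuinely rewire across $t_k$, and bounding the $\ell^p$ cost of this rewiring requires a careful matching argument, for instance via an algebraic Wasserstein stability for persistence modules together with a controlled analysis of the matrix reduction underlying the Structure Theorem. All other steps reduce to bookkeeping once this local estimate is in hand.
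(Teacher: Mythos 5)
First, note that the paper does not prove this statement at all: it is quoted verbatim from \cite{skraba_wasserstein_2021} and used as a black box in the proof of Theorem \ref{thm:stability_wasserstein}. So the only meaningful comparison is with the literature proof, and your interpolation strategy is essentially that proof's skeleton. The sound parts: \(h_t=(1-t)f+tg\) is monotone for every \(t\) (a convex combination of monotone functions); any two cells collide at either one value of \(t\) or all of them, so the critical set is finite; on each open chamber the cell ordering, hence the persistence pairing, is constant, and the canonical matching gives \(W_p(D_{t'},D_{t''})^p\leq\sum_{\sigma\in K}\lvert h_{t'}(\sigma)-h_{t''}(\sigma)\rvert^p=\lvert t''-t'\rvert^p\|f-g\|_p^p\) (valid here because the paper takes \(q=p\) in Definition \ref{def:pwass_dist}); and a finite triangle inequality over the chambers then yields the claim.

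The genuine gap is the one you flag yourself: the transposition lemma is asserted rather than proved, and it is the load-bearing step --- without it the argument establishes nothing. That said, you are overestimating what is required. You do not need any quantitative \(\ell^p\) bound on the ``cost of rewiring'' at a critical time: since the Lipschitz estimate holds on every open chamber and the critical set is finite, it suffices that \(t\mapsto D_t\) be continuous in \(W_p\) at each \(t_k\); then \(W_p(D_{t_k},D_{t_{k+1}})\leq(t_{k+1}-t_k)\|f-g\|_p\) follows by letting \(t'\to t_k^+\), \(t''\to t_{k+1}^-\) in the chamber estimate and applying the triangle inequality. That continuity is available cheaply in two ways. (i) Classical bottleneck stability gives \(d_B(D_t,D_{t_k})\leq\|h_t-h_{t_k}\|_\infty=\lvert t-t_k\rvert\,\|f-g\|_\infty\to 0\), and since every \(D_t\) has at most \(\lvert K\rvert/2\) off-diagonal points, bottleneck convergence of diagrams with a uniformly bounded number of points implies \(W_p\) convergence (the bottleneck-optimal matching has \(W_p\) cost at most \((4N)^{1/p}\) times its \(W_\infty\) cost for diagrams with at most \(N\) off-diagonal points). (ii) Alternatively, the vineyards transposition analysis: at \(t_k\) the two swapping cells carry the \emph{same} value, so whichever of the two candidate pairings is used, the resulting multiset of diagram points is identical, and the one-sided limits of \(D_t\) both equal \(D_{t_k}\); no algebraic Wasserstein stability or matrix-reduction bookkeeping is needed. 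Either route closes the hole. Finally, the passage ``to a limit of Riemann-type sums'' and the integral \(\int_0^1\|h_t'\|_p\,dt\) are unnecessary: the finite sum \(\sum_k(t_{k+1}-t_k)\|f-g\|_p=\|f-g\|_p\) already finishes the proof.
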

The existence of this result also justifies comparison with the Wasserstein metric to establish stability. 

\noindent
\textbf{Metrics on Rank Functions.} Rank functions lie in the space of functions from \(\R^{2+}\) to \(\R\) \citep{robins_principal_2016}. By fixing a metric on the space \(\R^{2+}\), we can then define the \(L^p\) metric on this space of functions as follows:
\[d_{L^p}(f,g) := \left(\int_{\R^{2+}} (f-g)^p d\omega\right)^{1/p},\]
where \(\omega\) is the measure on \(\R^{2+}\) corresponding to the fixed metric. For notational simplicity, we write 
\(\norm{f-g}_p = d_{L^p} (f,g),\)
keeping in mind that this metric comes from the \(L^p\) norm. This metric space is naturally endowed with a Hilbert structure for \(p=2\), which is a basic requirement for many FDA methodologies.

There are many choices for the metric as well as for the measure \(\omega\), however, the choice should avoid the pairwise distance between rank invariants being infinite. This happens, for example, when the metric is taken to be the Euclidean distance restricted to \(\R^{2+}\), which implies \(\omega\) is the Lebesgue measure.  Here, two rank invariants have finite distance if and only if their infinite cycles have identical birth times.

\begin{remark}
\label{rmk:infinite_distance_solved}
In our work, such issues are circumvented by keeping in mind the goal of real data applications, where the posets over which we are defining our diagrams are always finite and in which the filtrations always end in a simplicial complex with trivial homology. This means that every cycle in the filtration is destroyed at some point, except for the \(0\)-cycle representing the connected component of the space, which is always born at time \(0\). Thus, we can work with the Lebesgue measure without worrying about infinite distances between rank invariants: all bars in our barcodes will be finite, and therefore, every two rank functions will have finite pairwise distance, as desired. 
\end{remark}

%%%%%%%%%%%%%%%%%%%%%%%%%%%%%%%%%%%%%%%%%%%%%%%%%%%%%%%%%%%%

\section{$L^p$-Stability of Rank Functions}
\label{sec:stability_results}

In this section, we present our contributions of two stability guarantees for rank functions endowed with $L^p$ metrics with respect to the bottleneck distance and 1-Wasserstein distance for persistence diagrams.  We focus on the $L^p$ metric, as opposed to \cite{skraba_wasserstein_2021}, who study the weighted version,
\begin{equation}
\label{eq:lp_weight}
\norm{f - g}_p^{\text{w}} = \left(\int_{\R^{2+}} \abs{f - g}^p \phi(x - y)\, dx\, dy\right)^{1/p},
\end{equation}
where the weight function $\phi(\cdot)$ inside the integral satisfies \(\int_{\R} \phi(t) < \infty\). In particular, they choose \(\phi(t) = e^{-t}\) and obtain the following stability result as a Corollary from Theorem \ref{thm:cellular_wass_stability}.

\begin{corollary}[\hspace{1sp}\citep{skraba_wasserstein_2021}]
    Rank functions with the \(L^q\) weighted metric \eqref{eq:lp_weight} are 1-Lipschitz with respect to the \(p\)-Wasserstein distance between diagrams if and only if \(p=q=1\).
    \label{cor:L^p_weighted_stability}
\end{corollary}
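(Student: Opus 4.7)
The plan is to prove both implications directly via a barcode-based decomposition of rank functions.

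For the forward direction ($p=q=1$ implies 1-Lipschitz), the starting observation is that a rank function decomposes along its barcode: writing $\Dgm M = \{(b_i,d_i)\}_i$, we have $\beta^M = \sum_i \mathbf{1}_{R_{(b_i,d_i)}}$, where $R_{(b,d)} := \{(x,y)\in\R^{2+}: b\leq x \leq y < d\}$ (and the region associated to a diagonal point is empty). I would fix an optimal bijection $\pi\colon \Dgm M \to \Dgm N$ realising $W_1(\Dgm M, \Dgm N)$, with excess off-diagonal points paired to the diagonal, telescope
\[\beta^M - \beta^N \;=\; \sum_{(b,d)\in\Dgm M}\bigl(\mathbf{1}_{R_{(b,d)}} - \mathbf{1}_{R_{\pi(b,d)}}\bigr),\]
and apply the triangle inequality for $\|\cdot\|_1^{\text{w}}$. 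The forward direction then reduces to the per-bar estimate
\[\|\mathbf{1}_{R_{(b,d)}} - \mathbf{1}_{R_{(b',d')}}\|_1^{\text{w}} \;\leq\; |b-b'| + |d-d'|,\]
since summation over matched pairs yields $W_1(\Dgm M, \Dgm N)$ exactly.

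To prove the per-bar bound I would interpolate through $(b',d)$. For $b \leq b'$ and fixed $d$, the symmetric difference is the strip $\{(x,y): b\leq x < b',\, x \leq y < d\}$, whose weighted measure computes to $\int_b^{b'}\bigl(1 - e^{-(d-x)}\bigr)\,dx \leq b'-b$ because the integrand is bounded by $1$; the analogous argument handles variations of $d$. The crucial feature is that the exponential weight has total mass $\int_0^\infty e^{-t}\,dt = 1$, which is precisely what yields the unit Lipschitz constant.

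For the converse, I would exhibit counterexamples showing that 1-Lipschitz fails whenever $(p,q)\neq(1,1)$. For $p>1$ and $q=1$, take $M = \mathbb{I}[0,L)$ and $N = 0$: then $W_p(\Dgm M, \emptyset) = L\cdot 2^{1/p-1}$ (projecting $(0,L)$ onto the diagonal midpoint), whereas $\|\beta^M\|_1^{\text{w}} = L - 1 + e^{-L}$, so the ratio tends to $2^{1-1/p}>1$ as $L\to\infty$. For $q>1$ (with any $p\geq 1$), take the shift $M = \mathbb{I}[0,L)$ and $N = \mathbb{I}[\epsilon,L+\epsilon)$: a direct symmetric-difference computation gives $\|\beta^M - \beta^N\|_q^{\text{w}} \sim (C_L\epsilon)^{1/q}$ with $C_L>0$, while $W_p = 2^{1/p}\epsilon$, so the ratio $\sim \epsilon^{1/q-1}$ diverges as $\epsilon \to 0^+$ because $1/q<1$.

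The main technical obstacle will be the per-bar estimate: it is the central step of the forward direction and pinpoints why both $q=1$ and the unit-mass normalization of the exponential weight are essential (for $q>1$, $|\beta^M - \beta^N|^q$ amplifies multiplicities nonlinearly, destroying the clean bound). The converse counterexamples are routine once the correct limiting regimes---large $L$ for $p>1$, small $\epsilon$ for $q>1$---are identified.
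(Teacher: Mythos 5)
The paper does not actually prove this corollary: it is imported verbatim from \cite{skraba_wasserstein_2021}, so there is no in-paper argument to compare yours against. Taken on its own terms, your reconstruction is correct and follows the standard route. The decomposition $\beta^M=\sum_i\mathbf{1}_{R_{(b_i,d_i)}}$, the telescoping over an optimal matching, and the per-bar symmetric-difference estimate are precisely the devices the paper itself deploys in Appendix~\ref{app:proofs} to prove Proposition~\ref{prop:stability_truncated} and Theorem~\ref{thm:stability_wasserstein} (your sets $R_{(b,d)}\,\Delta\,R_{(b',d')}$ are, up to measure zero, the regions $D_i$ of \eqref{eq:D_j}); the one genuinely new ingredient in the weighted setting is your observation that the exponential weight makes each strip integral $\int_b^{b'}\bigl(1-e^{-(d-x)}\bigr)\,dx\le b'-b$ \emph{uniformly in $d$}, which is exactly what eliminates the dependence on the maximal persistence $R$ that appears in the paper's unweighted constants and yields the Lipschitz constant $1$. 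Your two families of counterexamples for the ``only if'' direction are sound and between them cover every $(p,q)\neq(1,1)$.

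Three points to tighten. First, you are (correctly) computing with the decaying weight $e^{-(y-x)}$; the paper's display \eqref{eq:lp_weight} literally reads $\phi(x-y)$ with $\phi(t)=e^{-t}$, which would grow on $\R^{2+}$, so you should state explicitly which convention you use. Second, in the forward direction your per-bar estimate is only stated for matched off-diagonal pairs; you also need the degenerate case of a point matched to the diagonal, namely $\norm{\mathbf{1}_{R_{(b,d)}}}_1^{\mathrm{w}}=(d-b)-1+e^{-(d-b)}\le d-b=\norm{(b,d)-\partial}_1$. This is the same computation you perform inside your first counterexample, but it must also appear in the Lipschitz half of the argument. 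Third, the value $W_p(\Dgm(M),\emptyset)=L\cdot 2^{1/p-1}$ presumes the paper's convention $W_p=W_{p,p}$ with $\ell^p$ ground metric; the conclusion $2^{1-1/p}>1$ survives under other conventions, but the choice should be made explicit.
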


The weight function in \eqref{eq:lp_weight} ensures finite distances between rank functions \cite[Lemma 2.1.]{robins_principal_2016}, which allows for the definition of an inner product structure on finite sets of rank functions and thus justifies the rank FPCA method proposed in \cite{robins_principal_2016}. In our study, finiteness of $L^p$ distances between rank functions is guaranteed  as explained in Remark \ref{rmk:infinite_distance_solved}, and the Hilbert space structure follows directly. 
As a result, the use of such a weight is not needed in our setting, and in fact it is not helpful for our purposes: its introduction fundamentally changes the expressions in the computations involving the metric, so that the proof of Corollary \ref{cor:L^p_weighted_stability} by \cite{skraba_wasserstein_2021} does not apply and cannot be replicated in our work.  This also underlines the inherent differences between our contribution compared to \cite{skraba_wasserstein_2021}.

\noindent
\textbf{Other Hindrances to Rank Function Stability.}
Under the original name of size functions \citep{frosini1992measuring}, several notions of ``pseudo-stability'' were established. These were achieved under \emph{pseudometrics}, where the distance between two distinct points can be zero, which makes pseudometrics less desirable for use in real data analyses (due to difficulties in intepretability) and therefore also makes pseudo-stability less desirable as a validating property to justify the use of rank functions as topological summary statistics.  Some examples are the \emph{deformation distance}, which were adapted to persistence diagrams and is more widely known today as the 1-Wasserstein distance between persistence diagrams; the \emph{Hausdorff pseudo-distance}, which similarly gave rise to the bottleneck distance between persistence diagrams; and the $L^p$ \emph{pseudo-distance}, which exhibited an unstable nature and did not appear to inspire any well-known distance in persistent homology. 

In particular, it is worth noting that \cite{damico_2003_optimal} and \cite{damico_2010_natural} renamed the Hausdorff pseudo-distance to the \emph{matching distance} to emphasize the fact that its computation amounts to finding an optimal matching between multisets, in the same way that the bottleneck distance does, and which was used in establishing the first results of stability for persistent homology. The matching distance was used to establish stability under noisy perturbations when restricted to a subset of size functions called \emph{reduced} size functions.

\subsection{Stability Under the Bottleneck Distance}

The most straightforward way to achieve stability for rank functions is to restrict away from the diagonal, which is known to complicate the metric geometry of the space of persistence diagrams \citep{Turner2014,cao2022geometric}.  To do this, we introduce a truncation of the rank function that will allow us to compare its sensitivity to noise to that of the bottleneck distance.

\begin{definition}
For any rank function $\beta$ and any $\delta >0$, we define the \emph{$\delta$-truncated rank function} as 
\[
\beta_{\delta}:=\beta \cdot \mathds{1}_{\R^{2+}_\delta}
\]
where \(\mathds{1}_{\R^{2+}_\delta}\) is the indicator function of the set \(\R^{2+}_\delta : =
{\{(x,y) \in \mathbb{R}^{2+} \, : \, y>x+\delta \}}\). 
\label{def:trucated_rank_function}
\end{definition}

In other words, the truncated rank function is just the rank function excluding a strip of width \(\delta>0\) above the diagonal \(\partial\) (see Definition \ref{def:persistence_diagram}). The truncated rank function locally satisfies a H\"{o}lder inequality for the $L^p$ norm with respect to the bottleneck distance on persistence diagrams. 

\begin{proposition}[Bottleneck Stability for Truncated Rank Functions]
Let \(1 \leq p < \infty\) and $M$ be a p.f.d. persistence module with finite intervals in its barcode decomposition. For every $\delta>0$, there exist $1\geq\eta>0$ and $K_{M,p}>0$ such that any persistence module $N$ satisfying
\[
d_B(\Dgm(M),\,\Dgm(N)) < \eta
\]
also satisfies
\begin{equation}
    \norm{\beta_\delta^M-\beta_\delta^N}_{p} \leq K_{M,p} \cdot d_B(\Dgm(M),\, \Dgm(N))^{1/p}.
\label{eq:estimate_truncanted_rank_function}
\end{equation}
In other words, the map \((\mathcal{D}, d_B) \to (\mathcal{I}_1, L^p)\) which sends each persistence diagram to its corresponding rank function is locally H\"{o}lder with exponent \(1/p\).
\label{prop:stability_truncated}
\end{proposition}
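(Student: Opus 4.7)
The strategy is to decompose each rank function as a sum of characteristic functions of quadrants indexed by bars, pair the bars of $\Dgm(M)$ and $\Dgm(N)$ via an almost-optimal bottleneck matching, and control the $L^p$ difference term by term on the truncated region $\R^{2+}_\delta$.

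For each diagram point $P=(b,d)$ set $R_P := [b,\infty)\times(-\infty,d]$ and $\chi_P:=\mathds{1}_{R_P}$, so that $\beta^M=\sum_{P\in\Dgm(M)}\chi_P$. Diagonal points $(c,c)$ contribute $\chi_{(c,c)}$ supported on $\{y\le c\le x\}\subseteq \{y\le x\}$, hence vanish identically on $\R^{2+}_\delta$. Set $\epsilon := d_B(\Dgm(M),\Dgm(N))$ and, for any $\eta'>\epsilon$, fix a bijection $\phi\colon \Dgm(M)\to\Dgm(N)$ of the full multisets satisfying $\sup_P\|P-\phi(P)\|_\infty<\eta'$. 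Partition the off-diagonal bars of $M$ into $M_1$ (matched to an off-diagonal bar of $N$) and $M_2$ (matched to the diagonal), and analogously split the off-diagonal bars of $N$ into $N_1:=\phi(M_1)$ and $N_2$. This yields
\[
\beta^M-\beta^N \;=\; \sum_{P\in M_1}(\chi_P-\chi_{\phi(P)}) \;+\; \sum_{P\in M_2}\chi_P \;-\; \sum_{Q\in N_2}\chi_Q.
\]

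Choose $\eta := \min(1,\delta/2)$. For $\epsilon<\eta$ and $\eta'$ close enough to $\epsilon$, every $P=(b,d)\in M_2$ satisfies $d-b\le 2\eta'<\delta$, so $\chi_P$ is supported in $\{y-x\le \delta\}$ and $\chi_P\cdot \mathds{1}_{y>x+\delta}\equiv 0$; the same argument handles $N_2$. Only the $M_1$-terms therefore survive after truncation. For a matched pair $P=(b,d)$, $\phi(P)=(b',d')$ the symmetric difference $R_P\triangle R_{\phi(P)}$ decomposes into a vertical slab $\{x\in[\min(b,b'),\max(b,b')]\}$ and a horizontal slab $\{y\in[\min(d,d'),\max(d,d')]\}$ (each further intersected with $R_P\cup R_{\phi(P)}$); both slabs have width at most $\eta'$. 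Intersecting with $\R^{2+}_\delta$ and using $\eta'\le\delta/2$ to confine the surviving region to a bounded rectangle whose remaining side is controlled by $L_M+\delta$, where $L_M$ is the maximum persistence of a bar in $\Dgm(M)$, a direct area computation gives
\[
\mu\bigl((R_P\triangle R_{\phi(P)})\cap \R^{2+}_\delta\bigr)\;\le\; (2L_M+\delta)\,\eta'.
\]

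Since $|\chi_P-\chi_{\phi(P)}|$ is the $\{0,1\}$-valued indicator of this symmetric difference, the previous bound yields $\|(\chi_P-\chi_{\phi(P)})\mathds{1}_{y>x+\delta}\|_p \le (2L_M+\delta)^{1/p}\,\eta'^{1/p}$. Minkowski's inequality over the $|M_1|\le n_M$ surviving terms, followed by $\eta'\downarrow\epsilon$, produces
\[
\|\beta_\delta^M-\beta_\delta^N\|_p \;\le\; K_{M,p}\,\epsilon^{1/p}, \qquad K_{M,p}:=n_M(2L_M+\delta)^{1/p},
\]
which is the claimed H\"older estimate. The main technical obstacle is the geometric estimate for the truncated symmetric difference: without the $\delta$-truncation the unrestricted symmetric difference of two quadrants has infinite Lebesgue measure, and it is precisely the choice $\eta\le\delta/2$ that simultaneously kills the diagonal-matched bars and forces the surviving slabs to be bounded, so that the measure scales linearly in $\eta'$ and yields the H\"older exponent $1/p$ after taking $p$-th roots.
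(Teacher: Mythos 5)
Your proof is correct and follows essentially the same route as the paper's: decompose the rank function additively over the points of the diagram, fix a near-optimal bottleneck matching, and bound each matched pair's contribution by the Lebesgue measure of a symmetric difference of regions, which scales linearly in the matching cost and yields the exponent $1/p$ after taking $p$-th roots. (Your quadrant indicators $\chi_P$ restricted to $\R^{2+}$ are exactly the paper's triangles $\{b\le x\le y\le d\}$, so the geometric estimate is the same.) The one genuine divergence is the treatment of short bars: the paper takes $\eta:=\min\{\delta/2,\,1,\,\delta_i/2\}$, shrinking $\eta$ below half the minimum persistence of $M$ so that no off-diagonal point of $M$ is matched to the diagonal, whereas you keep $\eta=\min(1,\delta/2)$ and instead observe that any bar matched to the diagonal has persistence below $\delta$ and hence its rank function vanishes identically on the truncated region $\R^{2+}_\delta$. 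Your variant is marginally stronger, since the admissible perturbation radius $\eta$ then depends only on $\delta$ and not on the shortest bar of $M$; the resulting constants differ only in inessential ways (your $n_M(2L_M+\delta)^{1/p}$ versus the paper's $m(2R+2)^{1/p}$, and in fact your slab estimate already gives $2L_M\eta'$ since the truncation removes a band of height $\delta\ge 2\eta'$ from each slab).
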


\begin{remark}
    The constant appearing in Proposition \ref{prop:stability_truncated} is precisely
    \begin{equation}
        K_{M,p} = m \, (2R + 2)^{1/p}
    \end{equation}
    where \(m\) is the number of points in \(\Dgm(M)\) and \(R= \max\{|d_i - b_i|: 1 \leq i \leq m\}\) is the maximum persistence in such diagram.
\end{remark}

Notice that we can always obtain a bound similar to that in \eqref{eq:estimate_truncanted_rank_function} where the constant depends on both persistence modules \(M\) and \(N\) (see Appendix \ref{app:proofs}). Proposition \ref{prop:stability_truncated} refines this approach by obtaining a constant that only depends on the persistence module \(M\). 
Nevertheless, an important limitation of Proposition \ref{prop:stability_truncated} is that it discards the points close to the diagonal---an important component in the definition of persistence diagrams---even though it sheds light on the behavior of rank functions in discrete settings. 
The bounds appearing in the proof (see Appendix) will be useful in our next derivations.

\subsection{Stability Under the 1-Wasserstein Distance}

The previously-mentioned limitation of Proposition \ref{prop:stability_truncated} is that it holds only for points away from the diagonal, which highlights the differences in sensitivity to noise between the $L^p$ norms on rank functions and the bottleneck distance on persistence diagrams. This observation was already made by \cite{landi1997new}; we further develop this observation with a formal study in this paper. 

As we will now establish, \emph{full} rank functions satisfy a stability property with respect to the 1-Wasserstein distance for persistence diagrams. As mentioned in Section \ref{sec:stability}, stability properties of the Wasserstein metric on persistence diagrams were not studied in detail until very recently, which limited their applicability as upper bounds in stability studies. We use the Cellular Wasserstein Stability Theorem (Theorem \ref{thm:cellular_wass_stability}  \cite{skraba_wasserstein_2021}) to establish stability for rank functions.

\begin{theorem}[1-Wasserstein Stability for Rank Functions]
Let \(p = 1,\, 2;\) and $M$ be a p.f.d.~persistence module with finite intervals in its barcode decomposition. Then there exists a constant $C_{M,p}>0$ such that for any other p.f.d.~persistence module $N$ satisfying
\(W_1(\Dgm(M),\, \Dgm(N)) \leq 1,\)
we have
\begin{equation}
    \norm{\beta^M-\beta^N}_{p} \leq C_{M,p} \cdot W_1(\Dgm(M),\, \Dgm(N))^{1/p}.
    \label{eq:estimate_wasserstein}
\end{equation}
In other words, the map $(\mathcal{D},\, W_1) \rightarrow (\mathcal{I}_1,\, L^1)$ sending a persistence diagram to its corresponding rank function is locally Lipschitz, and the same map between the spaces $(\mathcal{D},\, W_1) \rightarrow (\mathcal{I}_1,\, L^2)$ is locally H\"{o}lder with exponent \(1/2\).
\label{thm:stability_wasserstein}
\end{theorem}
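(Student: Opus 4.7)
The plan is to use an optimal $1$-Wasserstein matching $\phi$ between $\Dgm(M)$ and $\Dgm(N)$ and decompose
\[
\beta^M - \beta^N \;=\; \sum_{x \in \Dgm(M)} \bigl(\mathds{1}_{T_x} - \mathds{1}_{T_{\phi(x)}}\bigr),
\]
where $T(b,d) := \{(u,v) \in \R^{2+} : b \leq u \leq v < d\}$ is the rank triangle of the interval module $\Ibb[b,d)$, and $\mathds{1}_{T_x} \equiv 0$ for diagonal $x$. This realizes $\beta^M - \beta^N$ as a signed sum of triangle indicators, one per matched pair, whose masses can be controlled by the matching costs.

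The key geometric input, proved by a short case analysis on the order of the endpoints, is
\[
\abs{T(b_1,d_1) \triangle T(b_2,d_2)} \;\leq\; R\,\norm{(b_1,d_1)-(b_2,d_2)}_1 + \tfrac{1}{2}\norm{(b_1,d_1)-(b_2,d_2)}_1^2,
\]
with $R = \max(d_1 - b_1,\, d_2 - b_2)$. Under the hypothesis $W_1 \leq 1$, every matched perturbation is at most $1$, so bars of $N$ matched to bars of $M$ have persistence at most $R_M + 1$ (with $R_M$ the maximum persistence in $\Dgm(M)$), while bars of $N$ matched to the diagonal have persistence at most $W_1 \leq 1$. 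This is the mechanism that ensures the final constants depend only on $M$.

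For $p = 1$, I would apply the $L^1$ triangle inequality together with $\norm{\mathds{1}_{T_1} - \mathds{1}_{T_2}}_1 = \abs{T_1 \triangle T_2}$. Splitting the sum into matched real pairs, unmatched bars of $M$, and unmatched bars of $N$, the first group contributes $\leq (R_M + 1)$ times the perturbation budget, the second $\leq (R_M/2)$ times the persistence budget of unmatched $M$-bars, and the third $\leq (1/2)$ times the persistence budget of unmatched $N$-bars (using $d-b\leq 1$). These three budgets sum to exactly $W_1$, yielding $\norm{\beta^M - \beta^N}_1 \leq C_{M,1}\, W_1$. For $p = 2$, use that $\mathds{1}_{T_1} - \mathds{1}_{T_2}$ is $\{-1,0,1\}$-valued, so $\norm{\mathds{1}_{T_1} - \mathds{1}_{T_2}}_2 = \sqrt{\abs{T_1 \triangle T_2}}$. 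Applying the $L^2$ triangle inequality and splitting the same way, the at-most-$m$ matched real pairs (where $m = |\Dgm(M)|$) contribute $\sum \sqrt{\epsilon_i} \leq \sqrt{m \sum \epsilon_i} \leq \sqrt{m\, W_1}$ by Cauchy--Schwarz; each unmatched contribution has $L^2$ norm $(d-b)/\sqrt{2}$ and sums \emph{linearly} to at most $W_1/\sqrt{2} \leq \sqrt{W_1}/\sqrt{2}$ since $W_1 \leq 1$. Combining yields $\norm{\beta^M - \beta^N}_2 \leq C_{M,2}\, \sqrt{W_1}$.

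\textbf{Main obstacle.} The nontrivial point is the $L^2$ bound on unmatched bars of $N$: their number is not controlled by any quantity depending on $M$ alone, so a naive Cauchy--Schwarz across this group would introduce an uncontrolled $\sqrt{n}$ factor and spoil the $M$-only dependence of the constant. The resolution is that the $L^2$ norm of a single small-triangle indicator is linear (not sublinear) in persistence, so the unmatched contributions sum directly against the persistence budget $W_1$, and no Cauchy--Schwarz is needed for this group. The hypothesis $W_1 \leq 1$ is therefore used twice: once to bound persistences of bars in $N$ (so that the geometric constant depends only on $M$), and once to convert a linear $W_1$-bound into the desired $W_1^{1/p}$ H\"{o}lder bound.
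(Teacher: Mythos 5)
Your proposal is correct and follows essentially the same route as the paper's proof: decompose $\beta^M-\beta^N$ over the optimal $W_1$-matching into per-bar triangle indicators, bound the symmetric differences of matched triangles by (max persistence of $M$ plus a constant) times the matching cost, handle diagonal-matched bars via the exact triangle area $\tfrac12|d-b|^2$, and for $p=2$ apply Cauchy--Schwarz only to the group of at most $m$ matched real pairs while summing the diagonal-matched contributions linearly (their $L^2$ norm being $|d-b|/\sqrt2$), using $W_1\le 1$ to pass from $W_1$ to $W_1^{1/2}$. The only cosmetic difference is your three-way split versus the paper's two-way split (the paper absorbs bars of $M$ matched to the diagonal into the ``matched'' group), and your use of the $\ell^1$ rather than $\ell^\infty$ norm in the geometric lemma, which only changes constants.
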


\begin{remark}
    The constants appearing in Theorem \ref{thm:stability_wasserstein} are
    \begin{equation}
        C_{M,1} = 2R + 2,
    \end{equation}
    and
    \begin{equation}
       C_{M,2} =2 \cdot \max\left\{\left(2\, (R+1)\, m\right)^{1/2},\, \frac{1}{\sqrt{2}}\right\} 
    \end{equation}
    where \(m\) is the number of points in \(\Dgm(M)\) and \(R= \max\{|d_i - b_i|: 1 \leq i \leq m\}\) is the maximum persistence in such diagram.
\end{remark}

Theorem \ref{thm:stability_wasserstein} provides a stronger theoretical guarantee than Proposition \ref{prop:stability_truncated}, not only because it also covers the diagonal, but also because the constant \(C_{M,p}\) in \eqref{eq:estimate_wasserstein} is smaller than the constant \(K_{M,p}\) in \eqref{eq:estimate_truncanted_rank_function} (\(p =1,2)\). For \(p=1\), the latter depends on the number of points in the persistence diagram of \(M\) and \(R\), its maximum persistence, whereas the former only depends on \(R\). For \(C_{M,2}\) we maintain a dependence on the number of points in the diagram of \(M\), but it still provides a tighter bound than \(K_{M,2}\), since this dependence is squared instead of linear.

%%%%%%%%%%%%%%%%%%%%%%%%%%%%%%%%%%%%%%%%%%%%%%%%%%%%%%%%%

\section{Inference with Rank Functions}
\label{sec:application}

In this section, we study the performance of rank functions in machine learning tasks on real data.  Specifically, we focus on \emph{inferential} tasks---namely, classification and prediction---in the single-parameter setting.  

\subsection{Using Persistent Homology in Data Analysis}

Persistent homology captured by persistence diagrams and barcodes fulfils the essential requirements for data analysis: interpretability (via the Structure Theorem) and stability (with various stability results available).  Moreover, it is known to be a viable space for probability and statistics \citep{0266-5611-27-12-124007, Blumberg2014}. Despite these desirable properties, there remain challenges in utilizing persistence diagrams and barcodes in the full scope of statistical analysis, mainly due to their complicated geometry which results in, for example, non-unique geodesics and Fr\'echet means \citep{Turner2014}. As a consequence, in statistical questions there are, broadly speaking, two approaches to handling persistent homology in data analysis.  One approach entails developing new data analytic methodology, such as machine learning algorithms and statistical models, to accommodate barcodes or persistence diagrams directly (e.g., \cite{fasy2014, reininghaus2015stable, NIPS2017_6761}).  The other approach entails vectorizing barcodes persistence diagrams to apply existing methods (e.g., \cite{chazal2014stochastic, bubenik2015statistical, adams2017persistence}). 

Our approach diverges from both strategies by exploring rank functions as equivalent, alternative representations of persistent homology to leverage theory from functional data analysis (FDA). Methods for FDA, which were constructed to analyze data in the form of functions, are well-established in statistics and many of them arise as extensions of methodologies from multivariate data analysis.
Rank functions equipped with the $L^2$ metric, as discussed above, form a metric space %$\left(\mathcal{I}_n,\,L^2\right)$ 
that admits a Hilbert structure, and thus become a viable data structure amenable to integrating FDA with persistent homology. 

We emphasize here that we are not modifying the output of persistent homology, as vectorization methods do to persistence diagrams or barcodes, since rank functions are equivalent to barcodes and persistence diagrams.  Equally, we are not developing new methodology to accommodate persistent homology, as the existing field of FDA is directly applicable to persistent homology captured by rank functions.

\subsection{Functional Support Vector Machine on Single-Parameter Rank Functions}
\label{sec:FSVM}

Our first study is the performance of rank functions in the single-parameter persistence setting in classification.  Specifically, we study the clinical application of discerning heart rate variability between healthy individuals and post-stroke (acute ischemic) patients using a \emph{functional support vector machine} (FSVM) \citep{Rossi_2006}. 

\noindent
\textbf{Functional Data Analysis and High Dimensionality. } Functional data, where datasets are collections of functions, are inherently infinite dimensional, which means that the discrete realizations of the underlying surfaces or curves are high dimensional and can cause various problems, such as overfitting.

However, FDA methodologies are generally insensitive to dimensionality; they circumvent the problem of high dimensionality in broadly two ways. One way is via dimensionality reduction, where projecting the data onto a smaller collection of orthogonal bases produces lower dimensional vectors that are robust to discretizations. The choice of basis function depends on the underlying functions; e.g., the Fourier basis functions can be used to approximate functions that exhibit cyclical properties, while wavelet basis functions can be used to approximate functions that exhibit fluctuations. Alternatively, a data driven approach may be adopted, with one of the most commonly used techniques being \emph{functional} PCA (FPCA) \citep{dauxois1982asymptotic}, which is a descriptive technique (rather than inferential, which is the focus of our work), and has previously been implemented on rank functions by \cite{robins_principal_2016}.  FPCA works on the principle of finding orthonormal basis functions and projecting onto a finite subset of them with the greatest variation. 

Another alternative is to ensure within the construction of the methodology that it is invariant to the choice of grid points for evaluation, such that as the number of grid points increases, convergence to an appropriate result is guaranteed by construction. This is known as the \emph{refinement invariance principle} \citep{Cox2008PointwiseTW} and will not be our focus here.

\noindent
\textbf{Functional Support Vector Machines.  }
Classical SVMs are supervised binary classification methods that seek to find the optimal boundary in the feature space which distinguishes between observations of the two categories in a way such that the distance to the boundary from any data point is maximized. For our FSVM application on rank functions, let $\{f_1,f_2,\dots,f_N\}$ be a collection of centered, discretized rank functions with corresponding labels $(y_i)_{i=1,\dots,N} \in\{-1,1\}$ identifying the two groups. 
We adopt the \emph{soft margin} approach to determine the boundary for conventional reasons, as it is used in most computational packages. Here, ``soft'' refers to certain deviations from the boundary being allowed for classification in the approach. The boundary can be defined for some function $\psi \in \mathcal{H}$, $\mathcal{H}$ being a Hilbert space, and scalar $b\in \mathbb{R}$ as
$\langle \psi, f_i\rangle +b =0,$ such that 
$y_i(\langle \psi, f_i\rangle +b)\geq 1-\zeta_i, \, \forall\ i=1, \dots, n,$
where $\zeta_i$s are slack variables in the soft margin approach providing trade-offs between accuracy and overfitting. The optimal boundary is one which maximizes the margin given by $\frac{2}{\|\psi\|}$ and the optimization problem can be solved more easily through its dual formulation, by sequential minimal optimization \citep{platt}. 

Practically, however, not all data are linearly separable, in which case, a workaround is to project the data to higher dimensions where a clearer division between the two classes then becomes observable.  This technique is referred to as the \emph{kernel trick} \citep{boser}. Let $\phi$ be the projection.  Then the inner product from the previous optimization problem is replaced by a kernel function, i.e., $\langle \phi(f),\, \phi(g) \rangle$. Examples include the \emph{polynomial} kernel (of order $d$), $K(f,g)=(\langle f,g \rangle+1)^d$, and the \emph{Gaussian radial basis function} (GRBF) kernel, $K(f,g)=\exp (-\gamma \|f-g\|^2)$ \citep{Rossi_2006}. Their usage can be seen in a wide range of biomedical applications, for example, in the identification of PTSD patients based on resting state functional magnetic resonance imaging (fMRI) \citep{saba_2022} and also in the classification of brain functions on electroencephalographic signals \citep{xie_2008}. In these two examples, the GRBF kernel outperforms other kernels and classifiers. 

\noindent
\textbf{Data Description. } 
The dataset we study consists of 86 sequences of 512 beat-to-beat time intervals (RR series) extracted from electrocardiograms from a clinical study between two groups of people in a similar age category: one group of 46 healthy individuals used as control and one group of 40 patients who have recently experienced stroke episodes \citep{dgasecki2021,knarkiewicz2021}. Stroke patients generally show reduced heart rate variability compared to healthy individuals \citep{Lees2018HeartRV}. We aim to discern differences in heart rate variability between the two groups using persistent homology rank functions. For computations, we first linearly interpolated between the points in the RR series to construct continuous functions over time and then we constructed a sublevel set filtration based on the height function in the positive $y$-direction. We compute the zero-dimensional persistent homology rank function for each individual's RR series in the dataset. An example of steps in this process is visualized in Figure \ref{fig:rrrank}.

\begin{figure*}[htb]
\centering
\begin{subfigure}{.33\textwidth}
\centering
\subfloat[]{\label{rank:a}\includegraphics[width=\textwidth]{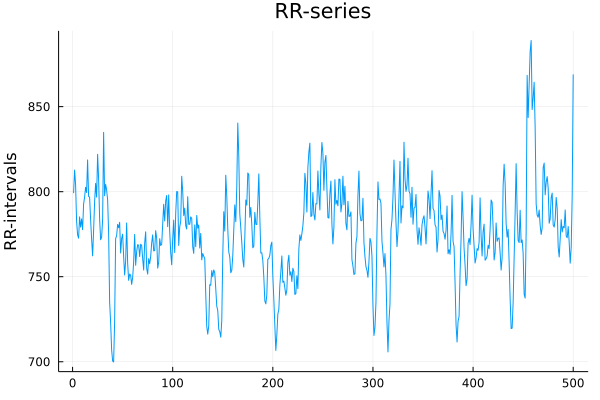}}
\end{subfigure}%
\begin{subfigure}{.33\textwidth}
\centering
\subfloat[]{\label{rank:b}\includegraphics[width=\textwidth]{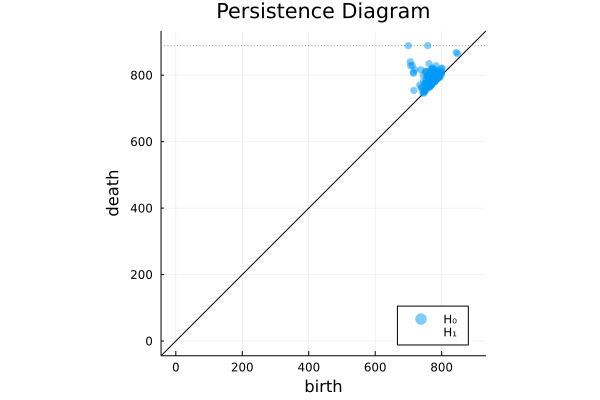}}
\end{subfigure}
\begin{subfigure}{.33\textwidth}
\centering
\subfloat[]{\label{rank:b}\includegraphics[width=\textwidth]{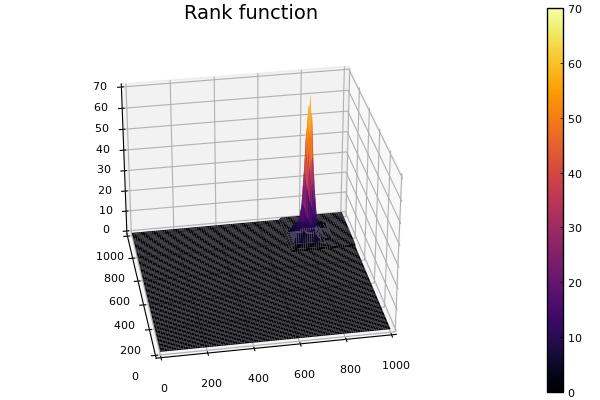}}
\end{subfigure}
\caption{An example showing (a) the RR series for a healthy individual, (b) its respective persistence diagram and (c) its rank function.}
\label{fig:rrrank}
\end{figure*}

\noindent
\textbf{Training the FSVM Classifier and Evaluating Performance. } On the set of rank functions computed from the data, we train FSVM classifiers using the linear kernel, GRBF kernel, and polynomial kernels of three different degrees (2, 3, and 5). Since we are working with discretized functions, we consider both the rank functions as computed from the data, and transformed versions using dimension reduction. We experiment with both a set of data-driven basis functions obtained from FPCA and a set of standard basis functions---the Haar wavelets.  Haar wavelets, in particular, have also been used in other inferential tasks in persistent homology; \cite{haberle2023wavelet} use them in persistent homology density estimation.

We evaluate the performance of the binary classifiers using two metrics: the accuracy and the area under the curve of the Receiver Operator Curve (AUC--ROC). The evaluation is carried out and averaged over ten iterations of five-fold cross-validation.

\noindent
\textbf{Results.}
\begin{table}[htb]
\centering
\caption{Average accuracy, AUC--ROC and runtimes  of classifiers constructed on rank functions and projected rank functions with linear, GRBF, and polynomial kernels over ten iterations of five-fold cross-validation.}
\label{table:results}
\begin{tabular}{c|ccc|}
\hline
\multicolumn{1}{|c|}{\multirow{2}{*}{Kernel}} & \multicolumn{3}{c|}{Discretized rank   functions + SVM} \\ \cline{2-4} 
\multicolumn{1}{|c|}{}                 & \multicolumn{1}{c|}{Accuracy} & \multicolumn{1}{c|}{AUC-ROC} & Runtime (s) \\ \hline
\multicolumn{1}{|c|}{Linear}           & \multicolumn{1}{c|}{39.5}     & \multicolumn{1}{c|}{0.408}   & 10.5        \\ \hline
\multicolumn{1}{|c|}{GRBF}             & \multicolumn{1}{c|}{75.8}     & \multicolumn{1}{c|}{0.762}   & 10.2        \\ \hline
\multicolumn{1}{|c|}{Polynomial (d=2)} & \multicolumn{1}{c|}{82.6}     & \multicolumn{1}{c|}{0.829}   & 8.51        \\ \hline
\multicolumn{1}{|c|}{Polynomial (d=3)} & \multicolumn{1}{c|}{81.6}     & \multicolumn{1}{c|}{0.829}   & 9.06        \\ \hline
\multicolumn{1}{|c|}{Polynomial (d=5)} & \multicolumn{1}{c|}{76.0}     & \multicolumn{1}{c|}{0.775}   & 8.28        \\ \hline
                                       & \multicolumn{3}{c|}{Projection on PCA basis + SVM}                         \\ \hline
\multicolumn{1}{|c|}{Linear}           & \multicolumn{1}{c|}{66.8}     & \multicolumn{1}{c|}{0.681}   & 53.2        \\ \hline
\multicolumn{1}{|c|}{GRBF}             & \multicolumn{1}{c|}{50.3}     & \multicolumn{1}{c|}{0.502}   & 3.14        \\ \hline
\multicolumn{1}{|c|}{Polynomial (d=2)} & \multicolumn{1}{c|}{84.2}     & \multicolumn{1}{c|}{0.842}   & 3.17        \\ \hline
\multicolumn{1}{|c|}{Polynomial (d=3)} & \multicolumn{1}{c|}{82.0}     & \multicolumn{1}{c|}{0.829}   & 3.14        \\ \hline
\multicolumn{1}{|c|}{Polynomial (d=5)} & \multicolumn{1}{c|}{75.5}     & \multicolumn{1}{c|}{0.774}   & 3.17        \\ \hline
                                       & \multicolumn{3}{c|}{Projection on Wavelet basis + SVM}                     \\ \hline
\multicolumn{1}{|c|}{Linear}           & \multicolumn{1}{c|}{39.7}     & \multicolumn{1}{c|}{0.404}   & 11.5        \\ \hline
\multicolumn{1}{|c|}{GRBF}             & \multicolumn{1}{c|}{75.0}     & \multicolumn{1}{c|}{0.756}   & 12.0        \\ \hline
\multicolumn{1}{|c|}{Polynomial (d=2)} & \multicolumn{1}{c|}{84.0}     & \multicolumn{1}{c|}{0.842}   & 9.99        \\ \hline
\multicolumn{1}{|c|}{Polynomial (d=3)} & \multicolumn{1}{c|}{80.3}     & \multicolumn{1}{c|}{0.816}   & 10.4        \\ \hline
\multicolumn{1}{|c|}{Polynomial (d=5)} & \multicolumn{1}{c|}{76.6}     & \multicolumn{1}{c|}{0.786}   & 10.0        \\ \hline
\end{tabular}%
\end{table}
The performance results of the FSVM classifier are summarized in Table \ref{table:results}. Overall, the FSVM classifiers with degree two and three polynomial kernels produce highest accuracy, $>80\%$, compared to the performance of other kernels implemented and gave on average AUC--ROC values of over 0.8, indicating excellent discrimination between the two categories. We also include the runtimes for these computations, including: (a) the computation of the PH rank functions; (b) the training of the corresponding SVM; and (c) the computation of 10 iterations of the accuracy and AUC--ROC over five-fold cross-validation, and the corresponding average. Although we observed a higher runtime for the linear SVM on PCA-projected data as well as convergence limitations within the specified maximum number of iterations, the computations are nevertheless manageable in terms of runtime.

In general, linear kernels perform less well in discriminating between functions of the two categories, except when the dimensionality of the rank functions is reduced by projecting onto its principal component functions.  In doing so, we considered only the first 30 principal component functions with the largest eigenvalues, which explains 95\% of the variation.  For the two transformations, working with lower dimensional vectors and polynomial kernels, we see similar, and slight improvements, in accuracy and AUC--ROC of the classifiers than the original rank function. 

Following \cite{graff2021persistent}, we take the SVM classifier quadruples of clinical indices related to the RR series and quadruples of features derived from persistence diagrams as our input data; we now discuss this analysis in further detail. The AUC--ROCs of the optimal models for both approaches can be found in Table \ref{table:comparison_rank_fns} as reported in \cite{graff2021persistent}. The optimized performance of the classifier using rank functions was, on average, better than the performance of standard heart rate variability indices on the frequency and time domain, which only achieved an average AUC--ROC of 0.79 and 0.75 respectively \citep{graff2021persistent}. For the persistence-based approach in \cite{graff2021persistent}, a wide range of topological indices were extracted from the persistent diagrams. Some indices were more typical, such as the total number of intervals, the sum of the lengths of all the persistent intervals, various mean and standard deviations; some were less conventional, such as the \emph{persistent entropy} (due to \cite{atienza2020stability} and given by $ h(r) =  \sum^n_{i=1} - \frac{\ell_i}{L} \log_2 \frac{\ell_i}{L},$ where $\ell_i$ is the length of an interval and $L$ is the sum of the length of all intervals),  the \emph{frac 5\%, 100, 200} (the number of intervals whose lengths are shorter than the threshold of 5\% the length of the longest interval, 100ms, and 200ms), or the \emph{signal-to-noise ratio }(which is the ratio of the sum of ``signal'' intervals over the sum of ``noise'' intervals, where the signal is considered to be the intervals longer than the threshold of 5\% the length of the longest interval and those not passing this threshold are considered as ``noise''). 
They introduce new geometric measures called the \emph{topological triangle indices}, which are based on the triangular interpolation on the RR interval histograms classically used in heart rate variability analysis and work by constructing a triangle on the persistence diagram with one side lying on the diagonal, enclosing a set of points with a small percentage of outliers and such that the triangle is as compact as possible. 
With these topological indices, it was shown that optimized combinations of parameters were able to achieve up to 0.84 AUC performance. However, indeed, computing these indices is an involved process.

\begin{table}[!hbt]
\caption{AUC--ROC of linear SVM conducted on quadruples of persistence and non-persistence based features as reported in \cite{graff2021persistent} using three-fold cross-validation and a standard scaler on the input data.}
\label{table:comparison_rank_fns}
\centering
\begin{tabular}{|l|c|}
\hline
Model parameters                                                 & AUC--ROC \\ \hline
Optimal model for frequency parameters                           & 0.75    \\ \hline
Optimal  model for time domain parameters                        & 0.79    \\ \hline
Triangle height, location, no.~of intervals, length sum          & 0.83    \\ \hline
Triangle location, misalignment, no.~of intervals, frac 5\%      & 0.83    \\ \hline
Triangle location, no.~of intervals, length sum, signal to noise & 0.84    \\ \hline
Triangle width, no.~of intervals, length sum, frac 5\%           & 0.84    \\ \hline
\end{tabular}%
\end{table}

For an additional comparison to a typical persistence-based approach, we also trained SVM classifiers on persistence images \citep{adams2017persistence} and persistence landscapes \citep{bubenik2015statistical}---stable vectorizations computed from the barcodes. For persistence images, SVM with a linear kernel achieved optimal performance, with an accuracy of 68.5\% and an AUC--ROC of 0.793, amongst SVM with alternative kernels, sparse SVM, and SVM applied after dimensionality reduction with PCA. For persistence landscapes, SVM with a GRBF kernel achieved optimal performance, with an accuracy of 81.0\% and an AUC--ROC of 0.903. Full results are shown in Appendix \ref{app:PI_HRV}.

In conclusion, the performance we find using rank functions, as a direct and equivalent representation of persistent homology (as opposed to vectorized and manipulated) has better performance over classification with persistence images and is on par and slightly improved over the much more involved approach of computing topological indices from \cite{graff2021persistent} and the one using persistence landscapes.

\section{Rank Functions in Multiparameter Persistent Homology}
\label{sec:mph_applications}

In this section, we explore the use of \emph{multiparameter} persistent homology \citep{carlsson_zomorodian_multiparameter2009} in real data applications.  This is currently an active area of research in TDA, due to interpretive and computational difficulties. 

\subsection{Multiparameter Persistent Homology: The Struggle to Generalize Barcodes} 

There is an important distinction between \emph{single-parameter} persistence modules, i.e.,~diagrams such as those we have considered so far \(M \in \Vect^{(\R,\leq)}\), and \emph{multiparameter} persistence modules \citep{carlsson_theory_2009}, i.e.,~diagrams which allow indexing over the poset \((\R^n,\preceq)\). Here, \(\preceq\) is the product order inherited from the total order in the reals, namely \((x_1,\dots,x_n)\preceq (y_1,\dots,y_n)\) if \(x_i\leq y_i\) for all \(i=1, \dots, n\). The construction discussed in Section \ref{sec:persistence} giving rise to persistent homology can be replicated for these posets to obtain \emph{multifiltrations} and \emph{multiparameter persistent homology}.

The Structure Theorem in Section \ref{sec:pd_and_barcodes} can be extended to general p.f.d.~persistence modules indexed over a small category \citep{botnan_decomposition_2020}, which includes the case of multiparameter persistence. 
As mentioned before, for single-parameter persistence, the only possible indecomposable modules are interval modules, i.e.,~modules \(\mathbb{I} [b,d )\) supported on intervals \([b,d) \subset \R\), allowing for the definition of barcodes as multisets of intervals, as well as the interpretability of births and deaths of topological features corresponding to the intervals.
Although there is a natural extension of the concept of an interval for general posets, the representation type of indecomposable modules over these posets is wider than those supported on intervals, so that no direct, parallel definition of barcode exists.  Moreover, it has been shown that there is in fact no hope for a complete, discrete invariant in multiparameter persistence \citep{carlsson_zomorodian_multiparameter2009}. 

Given the lack of complete invariants for multiparameter persistent homology, a central research interest has been the development of incomplete, interpretable, and computable invariants.
Some strategies to define incomplete invariants include viewing \(n\)-dimensional persistence modules as \(n\)-graded modules over polynomials \citep{carlsson_zomorodian_multiparameter2009} and capitalizing the invariants already existing for such objects, such as minimal presentations and multigraded betti numbers \citep{lesnick_interactive_2015, lesnick_computing_2022} or multigraded associated primes and local cohomology \citep{harrington_stratifying_2019}. Several other proposals bypass the Structure Theorem entirely. \cite{patel_generalized_2018} generalizes the M\"{o}bius inversion in single parameter persistence connecting rank functions and persistence diagrams to define generalized persistence diagrams. \cite{kim_generalized_2021} introduced generalized rank invariants, proving they are the courterpart to generalized persistence diagrams in the M\"{o}bius inversion by \cite{patel_generalized_2018} in the multiparameter setting. Developing a theory of modules over posets, \cite{miller_data_2020} defined QR codes for \(n\)-dimensional modules. Lastly, using resolutions and rank-exact structures, \cite{botnan_signed_2022} defined signed decompositions and signed barcodes, extending single parameter barcodes and including the generalized persistence diagrams by \cite{kim_generalized_2021}. As in single-parameter persistence, a third approach entails vectorizing the output of persistent homology by embedding the modules in a Hilbert space.  Some of these vectorizations are known to result in a loss of information, however. Popular vectorization methods in multiparameter persistence include persistence landscapes \citep{vipond_multiparameter_2020}, images \citep{carriere_multiparameter_2020}, and kernels \citep{corbet_kernel_2019}, among others. 

Remarkably, rank functions can be extended to multiparameter persistence quite naturally. Let
\(\R^{2n+} := \left\{(x,y) \in \left(\{-\infty\} \cup \R\right)^n \times \left(\R \cup \{\infty\}\right)^n: x\preceq y\right\}.\) 

\begin{definition}[Rank Invariant]
\label{def:rank_inv}
Given a p.f.d.\,multiparameter persistence module \(M \in \Vect^{(\R^n, \preceq)}\), its \emph{rank invariant} is defined as
\[\begin{array}{crcl}
    \beta^{M} : &  \R^{2n+} & \to & \Zbb\\
     &  (x,y) & \mapsto & \rank M(x\preceq y)=\dim \Im (M(x\preceq y)).
\end{array}\]
The space of rank invariants for \(n\)-dimensional persistence modules will be denoted by \(\mathcal{I}_n\).
\end{definition}

In any of these approaches, including in the case of rank invariants, applications to real data are in their infancy.  A main obstacle is the lack of efficient software to compute the invariants; the current technology also being restricted to two parameters. Rank invariants for biparameter persistence modules can be computed using RIVET~\citep{rivet,lesnick_interactive_2015,lesnick_computing_2022}---currently the standard software for most strategies in defining multiparameter invariants. Developing efficient algorithms and optimizing existing software remains an active research area~\citep{kerber_efficient_2019, scaramuccia_computing_2020,fugacci_compression_2023}

The question of metrics for rank invariants is equally important as for rank functions.  The well-established \emph{matching distance} for rank invariants restricts multiparameter persistence modules to lines \citep{damico_2003_optimal, damico_using_2006, damico_2010_natural}. 
The matching distance is known to be stable for rank invariants for filtrations obtained as sublevel sets of a function \(f: X \to \R^n\) on \(X\) a triangulable space and with respect to the \(L^\infty\) distance between two filter functions \citep{cerri_betti_2013}.  In a more general setting, the matching distance is also known to be stable with respect to the \emph{interleaving
distance} \citep{lesnick_theory_2015,landinterleaving}; and it is computable in polynomial time \citep{kerber_exact_2019, kerber_fast_2021}.

A significant challenge, however, in using the matching distance in applications---especially in inferential tasks---despite its computability is that it does not induce a Hilbert structure on the space of rank invariants, which is often a condition needed in order to adapt FDA methods (e.g., \cite{crawford2016functional}).
Thus, in our real data application, our focus remains on the \(L^2\) distance on rank invariants, which is also efficiently computed over a discretized grid, providing the necessary Hilbert structure to integrate with FDA methods.

%%%%%%%%%%%%%%%%%%%%%%%%%%%%%%%%%%%%%%%%%%%%%%%%%%%

\subsection{Application of Biparameter Rank Functions in Lung Tumor Classification}

We now demonstrate the inferential ability of the biparameter rank functions using nonparametric supervised learning methods on real data. The application focus is to predict lung tumor malignancies from computed tomography (CT) images, which has been studied previously by \cite{VANDAELE2023100657} using single-parameter topological summary statistics. Here, we aim to show that using biparameter persistent homology captures additional distinguishing features of the tumor morphology, both on a local and global scale, which, together with the rank functions, leads to improved classification. 

\noindent
\textbf{Data Description.} We study images from the Lung Image Database Consortium (LIDC), which is freely available from The Cancer Imaging Archive (TCIA) \citep{lidc1,lidc2}. From the LIDC data, we extract a subgroup of 70 chest CT scans, complete with annotations and masks, consisting of those with primary tumors that have either been diagnosed as benign (29) or malignant (41).  

Following the approach in \cite{VANDAELE2023100657}, we convert the collection of CT scan images and masks into 3D point clouds of landmarks on the tumor surfaces by sampling, as shown in Figure \ref{fig:3D_tumour}. On the resulting point clouds, we compute the biparameter rank invariants using two types of bifiltrations, both of which are extensions of the Vietoris--Rips filtration---namely, the \emph{degree--Rips} filtration and the \emph{height--Rips} filtration. The degree refers to the degree of connectivity measured on each vertex of the $1$-skeleton, while the height is measured along the $z$ coordinate, in the direction of stacking of the tumor slices. Using the bifiltration captures prominent features as they develop on the tumor surface along both filtration functions.

\begin{figure*}[htb]
\centering
\begin{minipage}{.35\linewidth}
\centering
\subfloat[]{\label{main:a}\includegraphics[width=\textwidth]{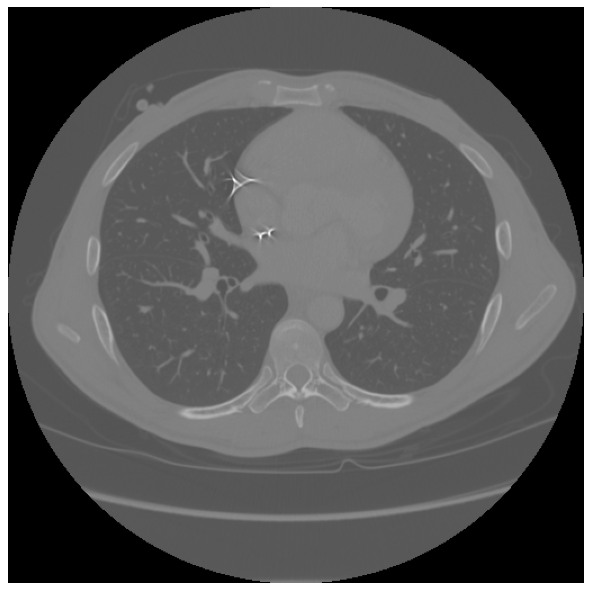}}
\end{minipage}
\begin{minipage}{.15\linewidth}
\centering
\subfloat[]{\label{main:b}\includegraphics[width=\textwidth]{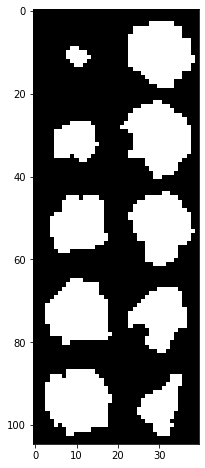}}
\end{minipage}
\begin{minipage}{.45\linewidth}
\centering
\subfloat[]{\label{main:b}\includegraphics[width=\textwidth]{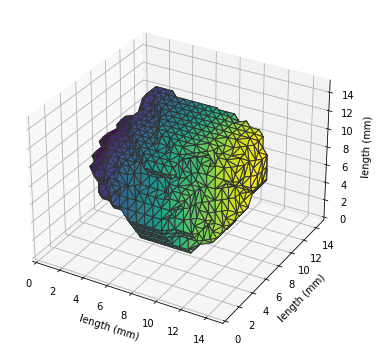}}
\end{minipage}
\caption{An example showing the data extraction process. Annotated CT images (a) from the LIDC combined with masks (b) are converted into a 3D surface (c) from which we can sample a point cloud.}
\label{fig:3D_tumour}
\end{figure*}

\noindent
\textbf{Classification.} We utilize the following two supervised classification methods:
\begin{itemize}
    \item \textbf{$k$-Nearest Neighbors} \citep{knn2}:
    This algorithm is a fundamental classification technique for both multivariate and functional data, where the decision for a new datum is made based on the majority vote of its $k$-closest neighbors. The method is adaptable to general metric spaces since the proximity can be measured using various metrics; the method has been studied in persistent homology by \cite{marchese2017k,leung2022k}. Here, we work with the rank invariants in $L^2$.

    \item \textbf{Functional Maximum Depth} \citep{maxdepth}: This method uses an extended notion of \emph{depth} on functional data to classify curves and surfaces. For a collection of rank invariants, $f_1(x), \dots, f_n(x)$, $x \in \mathcal{X}$, with $\mathcal{X}$ its the domain, we define a \emph{band} as the region or hyperspace bounded by an upper and lower function as follows:
    $$
    B(f_1,\dots,f_n) = \{(x,y): x \in \mathcal{X}, \min_{i=1,\dots, n} f_i(x)\leq y \leq \max_{i=1,\dots, n} f_i(x) \}.
    $$
    The \emph{band depth} $BD$ is the total number of times that $f$ lies within the band formed by a subcollection of the functions
    $BD_{n,J}(f) := \sum^J_{j=2} BD^{(j)}_n (f)$
    for a fixed value $J$, where $2\leq J \leq n$ and
    \begin{equation}
    \label{eq:bd_ind}
    BD^{(j)}_n (f) = {n \choose j}^{-1} \sum_{1\leq i_1 < i_2 <\dots < i_j \leq n} \mathds{1}_{Bd(f)}.
    \end{equation}
Here, $\mathds{1}_{Bd(f)}$ is the indicator function of the set $Bd(f) := \{G(f) \subseteq B(f_{i_1},f_{i_2}, \dots,f_{i_j})\} $ for  $G(f) = \{(x,f(x)):x \in \mathcal{X}\}$.

For our application, we use a modified band depth $MBD$ where instead of using a strict indicator function in \eqref{eq:bd_ind}, we consider the proportion of the hyperspace for which $f$ lies within the band: 
    \begin{equation}
    \label{eq:MBD}
    MBD^{(j)}_n (f) = \\ {n \choose j}^{-1} \sum_{1\leq i_1 <\dots < i_j \leq n} \omega(A(f;\, f_{i_1}, \dots,f_{i_j} ))/\omega(\mathcal{X}),
    \end{equation}
where $\omega$ is a Lebesgue measure on $\mathcal{X}$ and $A(f; f_{i_1} ,f_{i_2}, \dots ,f_{i_j} ) \equiv \{x \in \mathcal{X}: \min_{r=i_1,\dots ,i_j} f_r(x) \leq f(x) \leq \max_{r=i_1,\dots,i_j} f_r(x) \}.$ 
Hence, for any new invariant $f$, it will be assigned to the class in which the modified band depth \eqref{eq:MBD} is maximized. 
\end{itemize}

In the dataset we study, each of the tumor images is classified as either benign or primary malignant.  Our task is to use topological summaries of the images as predictors to determine whether a primary tumor is benign or malignant. We train the classifiers on the biparameter rank invariants computed from the whole dataset. Taking a 75/25 split of the data for training and testing and averaging over 50 iterations, we obtain the results in Table \ref{tab:nocontrast}. Furthermore, 24 of the 29 CT scans of benign tumors and 17 of the 41 CT scans of malignant tumors were taken with added contrast material. Refining to this smaller set, we see further improvements in the predictive accuracies reported in Table \ref{tab:wcontrast}. 

\noindent
\textbf{Results.}
\begin{table}[bth]
\centering
\caption{Accuracy and AUC--ROC of classification between benign and malignant primary tumors in the LIDC dataset.}
\label{tab:nocontrast}
\begin{tblr}{
  column{odd} = {c},
  column{4} = {c},
  column{6} = {c},
  cell{1}{1} = {r=4}{},
  cell{1}{3} = {c=2}{},
  cell{1}{5} = {c=2}{},
  vlines,
  hline{1,5} = {-}{},
  hline{2} = {3-6}{},
  hline{3-4} = {2-6}{},
}
{  Primary\\Benign\\ vs\\Malignant} &             & $k$-NN   &         & MBD    &         \\
                                    &             & Accuracy & AUC–ROC & Accuracy & AUC–ROC \\
                                    & Height–Rips    & 61.4     & 0.599   & 68.8     & 0.691   \\
                                    & Degree–Rips & 63.3     & 0.618   & 70.8     & 0.720   
\end{tblr}
\end{table}
\begin{table}[htb]
\centering
\caption{Accuracy and AUC--ROC of classification between benign and malignant primary tumors in LIDC dataset with added contrast material.}
\label{tab:wcontrast}
\begin{tblr}{
  column{odd} = {c},
  column{4} = {c},
  column{6} = {c},
  cell{1}{1} = {r=4}{},
  cell{1}{3} = {c=2}{},
  cell{1}{5} = {c=2}{},
  vlines,
  hline{1,5} = {-}{},
  hline{2} = {3-6}{},
  hline{3-4} = {2-6}{},
}
{  Primary\\Benign\\ vs\\Malignant\\(with contrast)} &             & $k$-NN   &         & MBD    &         \\
                                                     &             & Accuracy & AUC–ROC & Accuracy & AUC–ROC \\
                                                     & Height–Rips    & 83.8     & 0.830   & 76.9     & 0.768   \\
                                                     & Degree–Rips & 80.0     & 0.791   & 72.5     & 0.727   
\end{tblr}
\end{table}
Without added contrast, our results show that by training a modified maximum depth ($MBD$) classifier, we attain an optimal accuracy and AUC--ROC of 70.8 and 72.0 with the degree--Rips filtration. Overall the performances of MBD classifiers trained on the different bifiltrations are better than the performance of $k$-NN classifiers and also the optimized model in \cite{VANDAELE2023100657} which achieved an AUC--ROC of 67.7 on this dataset.

Moreover, comparing the performance on the subset of data with added contrast material, we find that the $k$-NN classifiers achieved better AUC--ROC with both filtrations than the optimal model in \cite{VANDAELE2023100657} which had an AUC--ROC of 78.0 on average. In fact, the average AUC--ROC for the best $k$-NN classifier based on height--Rips filtration was 83.0. Therefore, indeed we find that the additional information captured by the bifiltration leads to better predictions. 

%%%%%%%%%%%%%%%%%%%%%%%%%%%%%%%%%%%%%%%%%%%%%%%%%%%

\section{Discussion}
\label{sec:conclusion}
In this paper, we revisited persistent homology---which provides a geometric representation of data and can be used as a tool for point cloud processing---represented by rank functions in inferential, nonparametric FDA settings. In order to be able to validate our findings from the data analyses, we derived stability conditions on rank functions endowed with an appropriate metric over function space for FDA implementation: namely, the \(L^2\) distance, which provides a Hilbert structure on the space of rank functions. Stability of rank functions, alternatively known as persistent Betti numbers, was well established with respect to the matching distance \citep{cerri_betti_2013}, while, to the best of our knowledge, a thorough understanding of the stability behavior of rank functions endowed with the \(L^p\) metric was previously missing in the literature. We fill this gap with Proposition \ref{prop:stability_truncated}, showing that we can compare to the bottleneck distance for barcodes only when restricting to points away from the diagonal; and Theorem \ref{thm:stability_wasserstein}, where we are also able to find bounds for rank functions with respect to the 1-Wasserstein distance. We also evaluated the performance of the topological representation of data as rank functions in two real-world applications and found that incorporating topological information outperforms previous non-topological methods, as well as other persistence-inspired approaches that use complicated constructions rather than equivalent representations of persistence diagrams.  In addition to performing less well, these topological constructions based on persistent homology are more difficult to interpret and relate back to the original data.  A particularly important contribution in this work that we highlight is in the second application where we used biparameter rank invariants (i.e., rank functions adapted to multiparameter persistent homology).  The adaptation of multiparameter persistent homology to real data is still in its infancy and far from as widespread as in the single-parameter case, because, given the lack of direct extensions for the barcodes and persistence diagrams to higher dimensions, much of the work in the recent years has been foundational and devoted to finding alternative invariants that capture as much information as possible from computing persistent homology. We have found that using rank invariants directly in our real data analysis and machine learning task of classification provides excellent results, encouraging the use of this invariant in multiparameter persistent homology.

This naturally inspires several directions for future research. The first would be extending the theoretical stability results in Section \ref{sec:stability_results} to multiparameter persistent homology represented by rank invariants. 
 As in this work, this would be an important theoretical result needed to validate the experimental findings in this paper as well as justify its continued use in applying multiparameter persistent homology in real data applications.  An additionally important direction to study would be comparative: given the multitude of invariants proposed in the literature on multiparameter persistent homology, understanding the performance of rank invariants in comparison to that of other existing variants would provide a basis and guideline for invariant usage in real data applications.  Specifically, we would like to know whether rank invariants are able to capture more information, as a direct invariant obtained from persistent homology, than other functional vectorizations which embed modules into Hilbert spaces.  In the single-parameter setting, this is true, as we explored in this work.

%%%%%%%%%%%%%%%%%%%%%%%%%%%%%%%%%%%%%%%%%%%%%%%%%%%

\section*{Software \& Data Availability}
\label{sec:software_data}
All code and data to reproduce results in this paper are available from \httpsAddr{//github.com/Qiquan-Wang/rank_function_stability.git}. The heart rate variability data were collected and studied by \cite{dgasecki2021} and \cite{knarkiewicz2021}. The lung imaging LIDC data were obtained from TCIA \citep{lidc1}.

%%%%%%%%%%%%%%%%%%%%%%%%%%%%%%%%%%%%%%%%%%%%%%%%%%%

\section*{Acknowledgements}

We wish to thank Yueqi Cao, Lorin Crawford, Herbert Edelsbrunner, Claudia Landi, Amit Patel, and Nicole Solomon for helpful discussions.  Q.W.~is grateful to Matthew Williams for his support and guidance.  G.H.P.~and A.M.~also wish to acknowledge the Max-Planck Institute for Mathematics in the Sciences in Leipzig, Germany for hosting their visit in February 2018, where this work began.

I.G.R.~is funded by the UK Engineering and Physical Sciences Research Council [EP/S021590/1] Centre for Doctoral Training in Geometry and Number Theory (The London School of Geometry and Number Theory), University College London.
Q.W.~is funded by a CRUK--Imperial College London Convergence Science PhD studentship at the UK EPSRC Centre for Doctoral Training in Modern Statistics and Machine Learning (2021 cohort, PIs Monod/Williams), which is supported by Cancer Research UK under grant reference [CANTAC721/10021]. A.M.~is supported by the Engineering and Physical Sciences Research Council under grant reference [EP/Y028872/1], a London Mathematical Society Emmy Noether Fellowship [EN-2223-01], and an Imperial College London Elsie Widdowson Fellowship.

%%%%%%%%%%%%%%%%%%%%%%%%%%%%%%%%%%%%%%%%%%%%%%%%%%%

%-------------------------------------------------------------------------
% bibtex
\bibliographystyle{apalike} 
\bibliography{main}       

% biblatex with biber
% \printbibliography                

%-------------------------------------------------------------------------
\newpage

%%%%%%%%%%%%%%%%%%%%%%%%%%%%%%%%%%%%%%%%%%%%%%%%%%%

\appendix

\section{Proofs}
\label{app:proofs}

We now give the proofs of the key theoretical results presented in Section \ref{sec:stability_results}.

\begin{proof}[Proof of Proposition \ref{prop:stability_truncated}].
Let \(1 \leq p < \infty\) and \(M\) a p.f.d.~persistence module with barcode \(\Barc(M)=\{[b_i, \, d_i): 1 \leq i \leq m\}\). Call \(\delta_i:= d_i-b_i < \infty\) for all \(1\leq i \leq m\), fix \(\delta > 0\), and define \(\eta:= \min\{\delta/2,\, 1,\, \delta_i/2: 1\leq i \leq m\}\). 

Let \(N\) be a p.f.d.~persistence module such that \(d_B(\Dgm(M),\, \Dgm(N)) < \eta\), with barcode \(\Barc(N) = \{[\Tilde{b}_j, \Tilde{d}_j) : 1 \leq j \leq n\}\). By the definition of \(\eta\), the optimal matching \(\phi\) between points in \(\Dgm(M)\) and \(\Dgm(N)\) defined by the bottleneck distance matches all points outside of the diagonal in the diagram \(\Dgm(M)\) to points in \(\Dgm(N)\) outside of the diagonal. In addition, all the remaining points in \(\Dgm(N)\) matched to the diagonal are at an \(\ell^\infty\)-distance of their orthogonal projection to the diagonal of less than \(\delta/2\), which means that \(\beta^N_\delta = 0\) for all of them. 

With these two facts and the additivity of rank functions, we obtain

\begin{align}
        \norm{\beta^M_\delta - \beta^N_\delta}_p & = \norm{\sum_{i = 1}^m \beta_\delta^{\Ibb[b_i, d_i)} - \sum_{j=1}^n\beta_\delta^{\Ibb[\Tilde{b_j}, \Tilde{d_i}) }}_p \nonumber \\
        & = \norm{\sum_{i =1}^m\left( \beta_\delta^{\Ibb[b_i, d_i)} - \beta_\delta^{\Ibb[b_i', d_i')}\right) - \sum_{j \in J}\beta_\delta^{\Ibb[\Tilde{b_j}, \Tilde{d_i}) }}_p \nonumber\\
        & \leq \sum_{i =1}^m \norm{\beta_\delta^{\Ibb[b_i, d_i)} - \beta_\delta^{\Ibb[b_i', d_i') }}_p
    \label{eq:bound_truncated_rf}
\end{align}

where \(\phi (b_i, d_i) = (b_i', d_i')\) for all \(1 \leq i \leq m\) and \(J \subset \{1, \, \dots, \, n\}\) is the subset of indices of points in \(\Dgm(N)\) matched to the diagonal.

We now obtain a bound for \eqref{eq:bound_truncated_rf}. For $i \in \{1,\, \dots, \, m\}$, define the sets 
\begin{align}
    A_i := &\;\{(x,y) \in \mathbb{R}^{2+} : \; b_i \leq x \leq y \leq d_i\}, \nonumber\\
    B_i := &\;\{(x,y) \in \mathbb{R}^{2+} : \;b_i' \leq x \leq y \leq d_i'\}, \nonumber\\
    D_i := &\; A_i \, \Delta \, B_i = (A_i \cup B_i) \setminus (A_i \cap B_i), \label{eq:D_j}
\end{align}
where \(\Delta\) denotes the symmetric difference (see Figure \ref{fig:definition_Dj} for some illustrative examples of \(D_i\)). 

\begin{figure}[htb]
     \centering
     \begin{subfigure}[b]{0.3\columnwidth}
         \centering
         \includegraphics[width=\columnwidth]{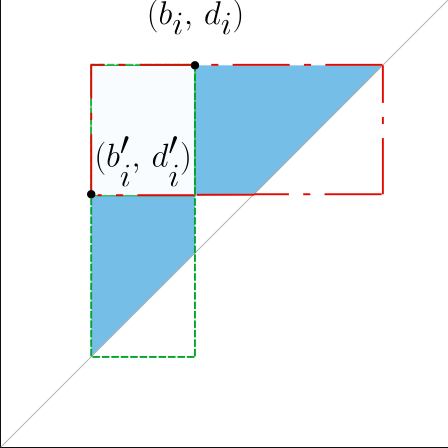}
         \caption{}
         \label{fig:case_A}
     \end{subfigure}
     \hfill
     \begin{subfigure}[b]{0.3\columnwidth}
         \centering
         \includegraphics[width=\columnwidth]{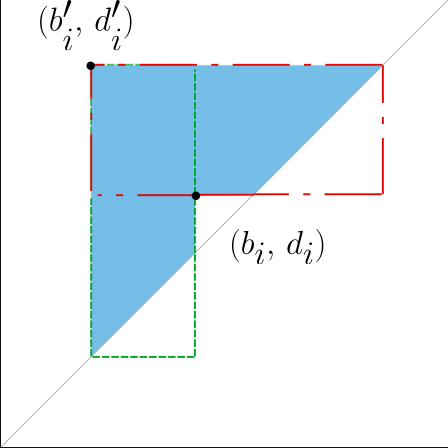}
         \caption{}
         \label{fig:case_B}
     \end{subfigure}
     \hfill
     \begin{subfigure}[b]{0.3\columnwidth}
         \centering
         \includegraphics[width=\columnwidth]{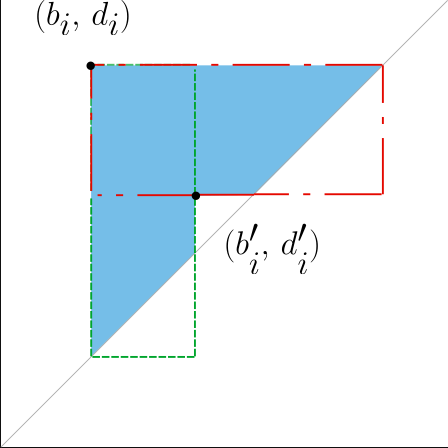}
         \caption{}
         \label{fig:case_C}
     \end{subfigure}
    \caption{Example of domains $D_i$ (shaded in blue) on which rank functions differ and the sketched rectangles (delineated by red dashed lines and green dotted lines) indicate the bound of $\omega(D_i)$ when $M=\mathbb{I} [ b_i,d_i )$ and $N=\mathbb{I} [ b'_i,d'_i )$.}
    \label{fig:definition_Dj}
\end{figure}

Notice that for \((x, y) \in D_i^c\), the truncated rank functions coincide, i.e., \(\beta^{\Ibb [b_i,d_i)} (x,y) = \beta^{\Ibb[b'_i,d'_i )} (x,y)\); also, these rank functions differ by one for \((x,y) \in D_j\). This implies
\begin{equation}
\label{eq:bound_rank_functions_interval_mods}
    \norm{\beta_\delta^{\Ibb[b_i, d_i)} - \beta_\delta^{\Ibb[b_i', d_i') }}_p = \omega(D_i)^{1/p}
\end{equation}
where \(\omega\) denotes the Lebesgue measure in \(\R^2\).

The rectangles depicted in red dashed lines and green dotted lines in Figure \ref{fig:definition_Dj} each have one side of length 
\begin{multline*}
    \max(d'_i, d_i) - \min(b_i, b_i') = \\
    \begin{cases}
    d_i - b_i' \leq d_i-b_i + \norm{(b_i,d_i) - (b_i',d_i')}_\infty & \text{(Figure \ref{fig:case_A}),}\\
    d_i'-b_i \leq d_i-b_i + \norm{(b_i,d_i) - (b_i',d_i')}_\infty & \text{(Figure \ref{fig:case_A}),}\\
    d_i' - b_i' \leq d_i-b_i + 2\norm{(b_i,d_i) - (b_i',d_i')}_\infty  & \textrm{(Figure \ref{fig:case_B}),}\\
    d_i - b_i & \textrm{(Figure \ref{fig:case_C}).}
    \end{cases}
\end{multline*}

The other side of both rectangles is bounded by  $\norm{(b_i,d_i)-(b'_i,d'_i)}_{\infty}$. Observe that this is also a bound for the lengths of the sides of the rectangle in the intersection. 

In case \ref{fig:case_C}, by adding the Lebesgue measure of the rectangles, we get
\[ \omega(D_i) \leq 2\,(d_i - b_i)\, \norm{(b_i,d_i)-(b'_i,d'_i)}_{\infty}\]
where \(\omega(\cdot)\) denotes the Lebesgue measure. In cases \ref{fig:case_A} and \ref{fig:case_B}, adding the Lebesgue measure of the rectangles and triangles that decompose the figures, we obtain
\begin{align*}
    \omega(D_i)  & \leq 2\,(d_i - b_i)\, \norm{(b_i,d_i)-(b'_i,d'_i)}_{\infty}  + 2\, \norm{(b_i,d_i)-(b'_i,d'_i)}_{\infty}^ 2 \\
    & \leq 2\,(d_i - b_i +1)\, \norm{(b_i,d_i)-(b'_i,d'_i)}_{\infty}
\end{align*}
where we have used that $\norm{(b_i,d_i)-(b'_i,d'_i)}_{\infty}\leq 1$ in the last inequality. Setting \(R := \max\{d_i - b_i: 1 \leq i \leq m\}\), we conclude
\begin{equation}
\label{eq:bound_measure_Dj}
    \omega(D_i) \leq 2(R+1) \,  d_B(\Dgm(M), \, \Dgm(N)).
\end{equation}
Returning to~\eqref{eq:bound_truncated_rf}, we obtain the bound
\[ \norm{\beta^M_\delta - \beta^N_\delta}_p \leq m \, (2R + 2)^{1/p} \, d_B(\Dgm(M), \, \Dgm(N))^{1/p}, \]
where \(m\) is the number of points in the persistence diagram of \(M\) and \(R\) is the maximum persistence (see Definition \ref{def:barcode}), so the constant only depends on \(M\), as desired. 
\end{proof}

As previously mentioned in Section \ref{sec:stability_results}, we can always obtain a bounding constant that depends on both modules \(M\) and \(N\) as follows: let \(R'\) be the maximum between the lifetimes of the bars in the barcodes of \(M\) and \(N\), so that \(\omega(D_j) \leq 2 R' \cdot d_B(\Dgm(M),\, \Dgm(N))\) (see \eqref{eq:D_j}); and then use this bound afterwards \eqref{eq:bound_truncated_rf}. In the proof of Proposition \ref{prop:stability_truncated} above, we refine this strategy by bounding with a constant that only depends on the module \(M\).

A natural follow-up question is whether it is possible to achieve a bound such as that in \eqref{eq:bound_measure_Dj}, but with the following dependency with respect to the bottleneck distance:
\begin{equation*}
    \omega(D_j) \leq C(b_j, d_j) \cdot d_B\left(\Dgm(M),\, \Dgm(N)\right)^p
\end{equation*}
for some values \(p\geq 2\).  This would imply a Lipschitz stability condition (notice that for \(p=1\), Proposition \ref{prop:stability_truncated} is actually a Lipschitz condition). In a similar vein to Corollary \ref{cor:L^p_weighted_stability} by \cite{skraba_wasserstein_2021}, the answer to this question is negative, and the key to this fact is the following counterexample. 

\begin{proposition}
\label{prop:no_lipschitz_bottleneck}
    Take \(\mathbb{I} [ b,d)\) and \(C(b,d) = 2(d-b +1) >0\) such that
    \[\omega(D) \leq C(b,d) \cdot \norm{(b,d) - (b',d')}_\infty\]
    for some other interval module \(\mathbb{I} [ b',d' )\), with \(\norm{(b,d) - (b',d')}_\infty < 1\) and \(D\subset \R^{2+}\) defined as
    \begin{align}
    A := &\;\{(x,y) \in \mathbb{R}^{2+} : \; b \leq x \leq y \leq d\}, \nonumber\\
    B := &\;\{(x,y) \in \mathbb{R}^{2+} : \;b' \leq x \leq y \leq d'\}, \nonumber\\
    D := &\; A \, \Delta \, B = (A \cup B) \setminus (A \cap B). \nonumber
\end{align}
    Then
    \[\omega(D) > C(b,d) \cdot \norm{(b,d) - (b',d')}_\infty^p \]
    for every \(p\geq 2\).
\end{proposition}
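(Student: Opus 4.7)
The plan is to exhibit an explicit one-parameter family \(\Ibb[b',d')\) approaching \(\Ibb[b,d)\) in which the symmetric difference \(D\) shrinks only linearly in the perturbation parameter \(\varepsilon\), which instantly rules out any bound of the form \(\omega(D) \leq C(b,d)\,\varepsilon^p\) for \(p \geq 2\). Assuming \(d > b\) (otherwise \(\Ibb[b,d)\) is trivial), I pick \(\varepsilon \in (0,\, \min\{1,\, d-b\})\) and set \(b' := b + \varepsilon\), \(d' := d\). Because \(b' > b\) while \(d' = d\), we have \(B \subset A\), so
\[
D \;=\; A \setminus B \;=\; \{(x,y) \in \R^{2+} : b \leq x < b+\varepsilon,\; x \leq y \leq d\}.
\]
A single integration then gives \(\omega(D) = \int_b^{b+\varepsilon}(d-x)\,dx = (d-b)\varepsilon - \tfrac{1}{2}\varepsilon^2\), while \(\norm{(b,d)-(b',d')}_\infty = \varepsilon\) by construction.

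Next I reduce the counterexample to the case \(p = 2\). The inequality \((d-b)\varepsilon - \tfrac{1}{2}\varepsilon^2 > 2(d-b+1)\varepsilon^2\) is equivalent, after dividing through by \(\varepsilon > 0\), to the threshold condition
\[
\varepsilon \;<\; \frac{d-b}{\tfrac{1}{2} + 2(d-b+1)},
\]
which is a strictly positive upper bound on \(\varepsilon\). Any \(\varepsilon\) satisfying this and \(\varepsilon \in (0,1)\) therefore settles the claim for \(p = 2\). Because \(\varepsilon < 1\), for every \(p \geq 2\) we have \(\varepsilon^p \leq \varepsilon^2\), so \(C(b,d)\,\varepsilon^p \leq C(b,d)\,\varepsilon^2 < \omega(D)\); this single choice of \((b',d')\) thus yields a counterexample simultaneously for all exponents \(p \geq 2\).

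There is no serious obstacle here: the argument reduces to an elementary geometric computation. The conceptual content is that shifting a single corner of the upper-triangular rectangle by \(\varepsilon\) sweeps out a strip of area of order \((d-b)\,\varepsilon\), which is linear rather than quadratic (or higher) in \(\varepsilon\). This linear scaling is precisely what forces the Hölder exponent \(1/p\) in Proposition \ref{prop:stability_truncated}, and it rules out any uniform improvement of that bound by sharpening the per-point estimate on \(\omega(D_i)\). The only minor care needed is the observation that by choosing \(\varepsilon < 1\), one gets a single \((b',d')\) that works uniformly across all \(p \geq 2\), rather than a \(p\)-dependent one.
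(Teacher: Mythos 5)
Your proof is correct and takes essentially the same approach as the paper's: exhibit an explicit perturbation of \(\mathbb{I}[b,d)\) for which \(\omega(D)\) is linear (to leading order) in the perturbation size, which immediately defeats any bound with exponent \(p\geq 2\). The only difference is cosmetic: the paper perturbs symmetrically to \(\mathbb{I}[b-\epsilon,\, d+\epsilon)\), giving \(\omega(D_\epsilon)=2(d-b)\epsilon+2\epsilon^2 > C(b,d)\,\epsilon^{p}\) for \emph{every} \(\epsilon\in(0,1)\) (assuming \(d>b\)), whereas your one-sided shrinkage \(\mathbb{I}[b+\varepsilon,\, d)\) additionally needs \(\varepsilon\) below the explicit threshold you compute.
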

\begin{proof}
    Let  \(\epsilon > 0\) and consider \(\mathbb{I}_\epsilon: = \mathbb{I} [ b - \epsilon,\, d + \epsilon)\) so that \(\norm{(b,d) - (b-\epsilon,\,d+\epsilon)}_\infty =\epsilon\). In this case we are in a situation similar to the one depicted in Figure \ref{fig:case_B}. Define
    \begin{multline*}
         D_\epsilon := \left\{(x,y) \in \R^{2+}: b\leq x\leq y \leq d\right\} \Delta\\
         \left\{(x,y) \in \R^{2+}: b - \epsilon \leq x\leq y \leq d + \epsilon\right\}.
    \end{multline*}
    For this domain, we exactly have
    \(
    \omega(D_\epsilon) = 2 (d-b) \epsilon + 2 \epsilon^2
    \)
    and taking \(0<\epsilon<1\), we get
    \[
    \omega(D_\epsilon) = 2 (d-b) \epsilon + 2 \epsilon^2 > 2(d-b + 1) \cdot \epsilon^p = C(b,d) \cdot \epsilon^p
    \]
    for every \(p\geq 2\).

    Now, for every other interval module \(\mathbb{I} [ b',d' )\) with \(\norm{(b,d) - (b',d')}_\infty < 1\), we can just consider the previous case with \(\epsilon = \min\left((\abs{b'-b}, \abs{d'-d}\right) < 1\), so that \(D_\epsilon \subset D\) and thus \(\omega(D_\epsilon) \leq \omega(D)\), concluding the proof. 
\end{proof}

From this counterexample and the previous proof, it is not possible to obtain a Lipschitz stability condition for \(p\geq 2\).

\begin{proof}[Proof of Theorem \ref{thm:stability_wasserstein}.]
Let \(M\) be a persistence module with barcode \(\Barc(M)=\{[\tilde{b_i}, \tilde{d_i}) : 1 \leq i \leq m\}\) and $N$ a p.f.d.\,persistence module over $\mathbb{R}$ such that 
\( W_1(\Dgm(M),\,\Dgm(N)) \leq 1\), with barcode \(\Barc(N) = \{[b_j', d'_j): 1 \leq j\leq n\}\). 

Let $\phi$ be the optimal matching between $\Dgm(M)$ and $\Dgm(N)$ induced by the 1-Wasserstein distance. Let \(J \subset \{1,\, \dots,\, n\} \) be the set of subindices corresponding to points in \(\Dgm(N)\) matched to points outside of the diagonal in \(\Dgm(M)\). Call \((b_j, d_j) = \phi^{-1} (b_j', d_j')\), for \(j \in J\), the corresponding matched points in \(\Dgm(M)\). Note that some of the points \((b'_j, d_j') \in \Dgm(N)\) with \(j \in J\) might be on the diagonal.
For $j \in J$, define the set \(D_j\) as in \eqref{eq:D_j}.

From the additivity of the rank function, we obtain the following sequence on inequalities

\begin{align}
        \norm{\beta^M - \beta^N}_p & = \norm{\sum_{i = 1}^m \beta^{\Ibb[\Tilde{b_i}, \tilde{d_i})} - \sum_{j=1}^n\beta^{\Ibb[b_j', d_i') }}_p \nonumber \\
        & = \norm{\sum_{j \in J}\left( \beta^{\Ibb[b_i, d_i)} - \beta^{\Ibb[b_i', d_i')}\right) - \sum_{j \in [m]\setminus J}\beta^{\Ibb[b_j', d_j') }}_p \nonumber\\
        & \leq \sum_{j \in J} \norm{\beta^{\Ibb[b_j, d_j)} - \beta^{\Ibb[b_j', d_j')}}_p + \sum_{j \in [m]\setminus J} \norm{\beta^{\Ibb[b_j', d_j') }}_p
        \label{eq:bound_p_norm_fank_function}
\end{align}
where \([m] = \{1, \, \dots, \, m\}\).

For the first term of the sum \eqref{eq:bound_p_norm_fank_function}, the bound in \eqref{eq:bound_rank_functions_interval_mods} still applies.
Notice also that the \(L^p\) norm of \(\beta^{\mathbb{I}[ b'_j,d'_j )}\) for \(j \in [m]\setminus J\) equals the \(p\)th root of the area of the triangle with vertices at \((b_j',d_j')\), \((b_j', b_j')\), and \((d_j', d_j')\), which is equal to
\(\dfrac{1}{2}\abs{d_j'-b_j'}^2.\) 

For \(p=1\), we can obtain the Lipschitz condition as follows:
\begin{align}
\norm{\beta^M-\beta^N}_{1} &\leq  {\sum_{j \in J}} \, \, \omega (D_j) +  {\sum_{j \in J' \setminus J}} \, \, \frac{1}{2} |d'_j - b'_j|^2 \nonumber\\ 
&\leq  {\sum_{j \in J}} \, \, \omega (D_j) +  {\sum_{j \in J' \setminus J}} \, \, |d'_j - b'_j| \label{eq:ineq_are_triangle_1}\\ 
&\leq 2\, (R+1)\, {\sum_{j \in J}} \, \,   \norm{ (b'_j,d'_j) - (b_j,d_j) }_{\infty} +  {\sum_{j \in J' \setminus J}} \, \, |d'_j - b'_j| \label{eq:1wass_proof_ineq_1}\\ 
&\leq 2\, (R+1)\, {\sum_{j \in J}} \, \,   \left( \abs{b_j -b_j'} + \abs{d_j-d_j'}\right) +  {\sum_{j \in J' \setminus J}} \, \, |d'_j - b'_j| \nonumber\\ 
& \leq 2\, (R + 2) \cdot W_1(\Dgm(M),\,\Dgm(N)), \nonumber
\end{align}
where in \eqref{eq:ineq_are_triangle_1}, we use that \(W_1(\Dgm(M),\,\Dgm(N)) \leq 1\) implies that \(\displaystyle \frac{\abs{d'_j- b'_j}}{2} \leq 1\) for \(j \in J'\setminus J\) and in \eqref{eq:1wass_proof_ineq_1} we have used the bound in \eqref{eq:bound_measure_Dj}.

For the case where \(p=2\), applying the Cauchy--Schwarz inequality we get
\[\sum_{j\in J}\omega(D_j)^{1/2} \leq \left(\abs{J} \sum_{j \in J} \omega(D_j)\right)^{1/2},\]
thus, we obtain
\begin{align}
\norm{\beta^M-\beta^N}_{2} & \leq \left(2\, (R+1)\, \abs{J}\right)^{1/2}\, \left( {\sum_{j \in J}} \, \,   \norm{ (b'_j,d'_j) - (b_j,d_j) }_{\infty}\right)^{1/2} \nonumber \\
 & \qquad +\dfrac{1}{\sqrt{2}}\;{\sum_{j \in J' \setminus J}} \, \, |d'_j - b'_j| \label{eq:ineq_are_triangle}\\  
 & \leq \max\left\{\left(2\, (R+1)\, \abs{J}\right)^{1/2},\, \frac{1}{\sqrt{2}}\right\} \cdot \nonumber \\
 & \left(W_1(\Dgm(M),\,\Dgm(N))^{1/2}  +W_1(\Dgm(M),\,\Dgm(N))\right) \nonumber\\
 & \leq 2 \cdot \max\left\{\left(2\, (R+1)\, \abs{J}\right)^{1/2},\, \frac{1}{\sqrt{2}}\right\}  \cdot\nonumber \\
 & \qquad\qquad\qquad\qquad  W_1(\Dgm(M),\,\Dgm(N))^{1/2} \label{eq:ineq_L2_not_matched}
\end{align}

where in \eqref{eq:ineq_are_triangle} we take the root and use the bound in \eqref{eq:bound_measure_Dj}; and in \eqref{eq:ineq_L2_not_matched}, we use that \(W_1(\Dgm(M),\, \Dgm(N)) \leq 1\).
\end{proof}

\section{Comparison with Persistence Landscapes}
One of the most popular topological vectorization methods in TDA is the \emph{persistence landscape}, proposed by \cite{bubenik2015statistical} as follows. Let \(M\in \Vect^{(\R, \leq)}\) be a single-parameter persistence module and \(\beta^M\) the corresponding rank function. 

\begin{definition}[Persistence landscape, $p$-landscape distance \citep{bubenik2015statistical}]
    For \(k \in \mathbb{N}\) \(k\)th-\emph{persistence landscape} is a function \(\lambda_k: \mathbb{R}\to \mathbb{R}\cup\{-\infty, +\infty\}\) defined as
    \[\lambda_k^M(t) := \sup\{m\geq 0 : \beta^M(t-m, t+m) \geq \}).\]
    Given two different persistence modules, \(M, \, N\in \Vect^{(\R,\leq)}\), the \emph{\(p\)-landscape distance} between them is defined as
\[d_{\lambda, p}(M, N) = \sum_{k=1}^\infty \norm{\lambda^M_k- \lambda^N_k}_p^p.\]
\end{definition}

In \cite{bubenik2015statistical}, several stability results are established for landscapes endowed with this metric. Although landscapes and rank functions are inherently different in nature---where the former is a vectorization of persistence diagrams and barcodes (building from the latter), while the latter is a direct and equivalent representation of diagrams and barcodes---both have been used in real-data applications: a main contribution of this work is the performance assessment of rank functions in inferential machine learning tasks.  This then raises the question of comparison between the stability results associated with landscapes versus those established in this work.

A first observation is that the \(p\)-landscape metric, introduced in \cite{bubenik2015statistical}, involves an infinite sum over the \(L^p\) distances of these landscapes, which is a first distinction from the direct \(L^p\) metrics that we consider over rank functions. Using the \(\infty\)-landscape distance, stability is then achieved with respect to the bottleneck distance between diagrams, which surpasses our Proposition 10 \cite[Theorem 13]{bubenik2015statistical}. However, this is expected, since the persistence landscape is an \emph{incomplete} invariant and thus sacrifices some information encompassed in the persistence diagram for improved stability in the \(L^\infty\) metric, while rank functions, as mentioned previously, are exactly equivalent to persistence diagrams and therefore comprise all topological information of the data captured by persistent homology.

Up until recently, such a stability bound was the best possible, since stability of PH was only rigorously established for the bottleneck distance. However, thanks to new stability results for the $p$-Wasserstein distances established by \cite{skraba_wasserstein_2021}, stability is now possible with respect to these metrics. This is what we achieve in Theorem \ref{thm:stability_wasserstein}. Comparing this result to the \(p\)-landscape stability theorem \cite[Theorem 16]{bubenik2015statistical} is challenging due to different settings and metrics. \cite[Theorem 16]{bubenik2015statistical} considers filtrations over triangulable, compact metric spaces---a restriction we do not impose. In this setting, the \(p\)-landscape metric is compared to the \(L^\infty\) distance between filtering functions in sublevel-set filtrations. Our work extends beyond sublevel-set filtrations, and our \(L^p\) metrics over rank functions are thus not easily comparable to the \(p\)-landscape distances.

\section{HRV Classification Results using Persistence Images and Persistence Landscapes}\label{app:PI_HRV}

We include here the results of the SVM classification using the vectorization techniques of persistence images and persistence landscapes on HRV data. Table \ref{tbl:PI_SVM} shows the average accuracy, AUC--ROC and runtimes (in seconds) of the SVM classifier using persistence images under various kernels with and without dimensionality reduction using PCA. Table \ref{tbl:PL_SVM} shows the same data for the 5 first persistence landscapes \(\lambda_k\), \(1 \leq k \leq 5\). In these tables, the runtime includes: (a) the computation of the PH barcodes for all data, and from them, the computation of the corresponding vectorizations; (b) the training of the corresponding SVM; and (c) the computation of the accuracy and AUC-ROC over five-fold cross-validation. Experiments were run in a processor 11th Gen Intel Core i5-1135G7, with 16GB RAM. Table \ref{tbl:Sparse_SVM} further shows the average accuracy and AUC--ROC of linear support vector classification (LSVC) and sparse LSVC on the data. Recall that where standard LSVC adopts the $L^2$ penalty in the loss function, sparse LSVC adopts the $L^1$ norm, effectively reducing the dimensionality of the feature space \citep{zhu03}.

\begin{table}[htb]
\caption{Average accuracy, AUC--ROC and runtimes of SVM classifiers constructed on persistence images (and persistence images reduced by PCA) computed on HRV data with linear, GRBF, polynomial and sigmoid kernels over ten iterations of five-fold cross-validation.}
\label{tbl:PI_SVM}
\centering
\begin{tabular}{|c|ccc|}
\hline
\multirow{2}{*}{Kernel} & \multicolumn{3}{c|}{SVM}                                                   \\ \cline{2-4} 
                        & \multicolumn{1}{c|}{Accuracy} & \multicolumn{1}{c|}{AUC-ROC} & Runtime (s) \\ \hline
Linear                  & \multicolumn{1}{c|}{65.2}     & \multicolumn{1}{c|}{0.771}   & 4.31        \\ \hline
GRBF                    & \multicolumn{1}{c|}{64.9}     & \multicolumn{1}{c|}{0.756}   & 4.64        \\ \hline
Polynomial (d=2)        & \multicolumn{1}{c|}{48.2}     & \multicolumn{1}{c|}{0.695}   & 5.56        \\ \hline
Polynomial (d=3)        & \multicolumn{1}{c|}{44.9}     & \multicolumn{1}{c|}{0.680}   & 4.05        \\ \hline
Polynomial (d=5)        & \multicolumn{1}{c|}{43.0}     & \multicolumn{1}{c|}{0.678}   & 4.90        \\ \hline
Sigmoid                 & \multicolumn{1}{c|}{60.1}     & \multicolumn{1}{c|}{0.724}   & 4.56        \\ \hline
\multirow{2}{*}{Kernel} & \multicolumn{3}{c|}{PCA + SVM}                                             \\ \cline{2-4} 
                        & \multicolumn{1}{c|}{Accuracy} & \multicolumn{1}{c|}{AUC-ROC} & Runtime (s) \\ \hline
Linear                  & \multicolumn{1}{c|}{65.24}    & \multicolumn{1}{c|}{0.771}   & 1.58        \\ \hline
GRBF                    & \multicolumn{1}{c|}{65.36}    & \multicolumn{1}{c|}{0.756}   & 1.56        \\ \hline
Polynomial (d=2)        & \multicolumn{1}{c|}{44.18}    & \multicolumn{1}{c|}{0.718}   & 1.33        \\ \hline
Polynomial (d=3)        & \multicolumn{1}{c|}{43.02}    & \multicolumn{1}{c|}{0.686}   & 2.01        \\ \hline
Polynomial (d=5)        & \multicolumn{1}{c|}{42.69}    & \multicolumn{1}{c|}{0.676}   & 1.68        \\ \hline
Sigmoid                 & \multicolumn{1}{c|}{61.13}    & \multicolumn{1}{c|}{0.726}   & 2.44        \\ \hline
\end{tabular}%
\end{table}

\begin{table}[]
\caption{Average accuracy, AUC--ROC and runtimes of SVM classifiers constructed on persistence landscapes (and persistence landscapes reduced by PCA) computed on HRV data with linear, GRBF, polynomial and sigmoid kernels over ten iterations of five-fold cross-validation.}
\label{tbl:PL_SVM}
\centering
\begin{tabular}{|c|ccc|}
\hline
\multirow{2}{*}{Kernel} & \multicolumn{3}{c|}{SVM}                                                   \\ \cline{2-4} 
                        & \multicolumn{1}{c|}{Accuracy} & \multicolumn{1}{c|}{AUC-ROC} & Runtime (s) \\ \hline
Linear                  & \multicolumn{1}{c|}{77.0}     & \multicolumn{1}{c|}{0.843}   & 0.421       \\ \hline
GRBF                    & \multicolumn{1}{c|}{81.0}     & \multicolumn{1}{c|}{0.903}   & 0.314       \\ \hline
Polynomial (d=2)        & \multicolumn{1}{c|}{68.2}     & \multicolumn{1}{c|}{0.866}   & 0.427       \\ \hline
Polynomial (d=3)        & \multicolumn{1}{c|}{62.6}     & \multicolumn{1}{c|}{0.852}   & 0.390       \\ \hline
Polynomial (d=5)        & \multicolumn{1}{c|}{58.5}     & \multicolumn{1}{c|}{0.820}   & 0.369       \\ \hline
Sigmoid                 & \multicolumn{1}{c|}{63.2}     & \multicolumn{1}{c|}{0.700}   & 0.632       \\ \hline
\multirow{2}{*}{Kernel} & \multicolumn{3}{c|}{PCA + SVM}                                             \\ \cline{2-4} 
                        & \multicolumn{1}{c|}{Accuracy} & \multicolumn{1}{c|}{AUC-ROC} & Runtime (s) \\ \hline
Linear                  & \multicolumn{1}{c|}{77.02}    & \multicolumn{1}{c|}{0.843}   & 0.524       \\ \hline
GRBF                    & \multicolumn{1}{c|}{80.87}    & \multicolumn{1}{c|}{0.900}   & 0.498       \\ \hline
Polynomial (d=2)        & \multicolumn{1}{c|}{60.48}    & \multicolumn{1}{c|}{0.797}   & 0.479       \\ \hline
Polynomial (d=3)        & \multicolumn{1}{c|}{55.70}    & \multicolumn{1}{c|}{0.868}   & 0.342       \\ \hline
Polynomial (d=5)        & \multicolumn{1}{c|}{46.52}    & \multicolumn{1}{c|}{0.828}   & 0.461       \\ \hline
Sigmoid                 & \multicolumn{1}{c|}{67.47}    & \multicolumn{1}{c|}{0.765}   & 0.417       \\ \hline
\end{tabular}%
\end{table}

\begin{table}[htb]
\caption{Average accuracy and AUC--ROC of linear SVM and sparse linear SVM classifiers on persistence images computed on HRV data over ten iterations of five-fold cross-validation.}
\label{tbl:Sparse_SVM}
\centering
\begin{tabular}{|l|l|l|}
\hline
           & Accuracy & AUC--ROC \\ \hline
Linear SVM & 68.5   & 0.793   \\ \hline
Sparse LSVM & 65.7    & 0.682   \\ \hline
\end{tabular}
\end{table}

\end{document}